\documentclass[11pt]{article}

\usepackage[T1]{fontenc}
\usepackage[utf8]{inputenc}
\usepackage{newtxtext}
\usepackage{amsmath,amssymb,amsthm,mathtools}
\usepackage{newtxmath}
\usepackage{geometry}
\usepackage{microtype}
\usepackage[dvipsnames]{xcolor}
\usepackage{fancyhdr}
\usepackage[super,sort&compress]{natbib}
\usepackage[colorlinks=true,linkcolor=NavyBlue,citecolor=NavyBlue,urlcolor=NavyBlue]{hyperref}
\hypersetup{
  pdftitle={Calabi metric on the Space of k-Hessian Potentials},
  pdfauthor={Author Name},
  pdfsubject={Complex Hessian equations and Fisher--Rao geometry},
  bookmarksopen=true,
  bookmarksnumbered=true
}

\allowdisplaybreaks
\numberwithin{equation}{section}

\newtheorem{theorem}{Theorem}[section]
\newtheorem{proposition}[theorem]{Proposition}

\newtheorem{corollary}[theorem]{Corollary}

\theoremstyle{definition}
\newtheorem{definition}[theorem]{Definition}

\theoremstyle{remark}
\newtheorem{remark}[theorem]{Remark}

\newcommand{\ii}{\sqrt{-1}}
\newcommand{\ddc}{\ii\,\partial\bar\partial}
\newcommand{\R}{\mathbb{R}}
\newcommand{\Hk}{\mathcal{H}_k(X,\omega)}
\newcommand{\Hkbar}{\widetilde{\mathcal{H}}_k(X,\omega)}
\newcommand{\DV}{\mathcal{D}_V}
\newcommand{\Tk}{\mathcal{T}_k}
\newcommand{\Qk}{\mathcal{Q}_k}
\newcommand{\FR}{\mathrm{FR}}
\newcommand{\HM}{\mathrm{HM}}
\newcommand{\HC}{\mathrm{HC}}
\newcommand{\Cinf}{C^{\infty}}
\DeclareMathOperator{\tr}{tr}

\title{Calabi metric on the Space of $k$-Hessian Potentials}
\author{Linwei Zhang}
\date{}

\begin{document}

\maketitle
\thispagestyle{plain}
\vspace{-1.2em}

\begin{abstract}
Let $(X,\omega)$ be a compact K\"ahler manifold of complex dimension $n$,
and let $1\leq k\leq n$. We consider the space of smooth $k$-admissible
potentials introduced in the study of the Hessian-cscK equation by
Guo--Smith--Tong. The complex $k$-Hessian equation identifies this space,
modulo constants, with the space of smooth positive densities of fixed total
mass. We compute the differential of the Hessian density map and use it to
pull back the Fisher--Rao metric. The resulting Riemannian metric is
\[
 G^{\HC,k}_{\varphi}(u,v)
 =\int_X
 \bigl(\Delta_{\varphi}^{(k)}u\bigr)
 \bigl(\Delta_{\varphi}^{(k)}v\bigr)
 \,\omega_{\varphi}^{k}\wedge\omega^{n-k}.
\]
We then show that the square-root Hessian density map
\[
 \varphi\longmapsto
 2\sqrt{\frac{\omega_{\varphi}^{k}\wedge\omega^{n-k}}{\omega^n}}
\]
is a global Riemannian isometry onto the positive part of the
$L^2(X,\omega^n)$-sphere of radius $2\sqrt{V}$, where
$V=\int_X\omega^n$. For $k=n$, this recovers the spherical description of
the classical Calabi metric. We also derive the curvature,
geodesic and diameter formulas, and prove a quantitative local
comparison between adjacent Hessian--Fisher distances.
\end{abstract}

\setcounter{tocdepth}{2}
\tableofcontents
\clearpage

\section{Introduction}

Let $(X,\omega)$ be a compact K\"ahler manifold of complex dimension
$n$. The space of K\"ahler potentials
\[
 \mathcal H_n(X,\omega)
 :=\bigl\{\varphi\in C^\infty(X,\mathbb R):
 \omega_\varphi:=\omega+\sqrt{-1}\,\partial\bar\partial\varphi>0\bigr\}
\]
carries a rich collection of infinite-dimensional Riemannian structures.
Among them, the Mabuchi $L^2$ metric plays a central role in the study of
constant scalar curvature K\"ahler metrics. Another natural Riemannian
structure is the Calabi metric.

In \cite{Calamai2012}, Calamai studied the geometry of the Calabi metric
on the space of K\"ahler metrics in a fixed K\"ahler class. By Yau's
solution of the Calabi conjecture \cite{Yau1978}, the complex
Monge--Amp\`ere equation identifies K\"ahler potentials modulo constants
with smooth positive volume densities of prescribed total mass.
Using this identification, Calamai showed that the Calabi metric admits
a spherical realization: after taking the square root of the volume
density, the space is isometric to the positive part of an
infinite-dimensional $L^2$-sphere. This description gives constant
positive sectional curvature, explicit geodesics and solutions of the
associated geodesic initial- and boundary-value problems, as well as
information on distance and diameter.

More recently, Guo--Smith--Tong \cite{GST2023} introduced an analogue of
the Mabuchi metric on the space $\mathcal H_k(X,\omega)$ of
$k$-Hessian-admissible potentials, for $1\leq k\leq n$. They studied the
resulting infinite-dimensional Riemannian geometry and its connection
with the Hessian-cscK equations. Their construction is a $k$-Hessian
extension of the Mabuchi geometry, rather than of the Calabi geometry.

In this paper, using the solvability of the complex $k$-Hessian
equation, we define a new Riemannian metric on
$\mathcal H_k(X,\omega)/\mathbb R$ by pulling back the Fisher--Rao metric
on the space of smooth positive densities. For $k=n$, it reduces to the
classical Calabi metric. We establish a global spherical model for this
metric and use it to compute its sectional curvature, describe its
geodesics, and determine the diameter of the potential space. Finally,
we compare the distances associated with different Hessian orders in a
neighborhood of the origin of $\mathcal H_n(X,\omega)$, obtaining a
quantitative local comparison and strict monotonicity for adjacent
orders. Our main theorem is as follows:

\begin{theorem}[Main result]\label{thm:introduction-main}
Let $(X,\omega)$ be a compact connected K\"ahler manifold of complex
 dimension $n$, and let $1\leq k\leq n$. Denote by $\Hk$ the space of
 smooth $k$-Hessian-admissible potentials. Then the quotient
 $\Hkbar=\Hk/\mathbb R$ admits a (weak) Riemannian metric $g_k$
 with constant sectional curvature
 \[
  \operatorname{Sec}_{g_k}\equiv\frac{1}{4V},
 \]
 and its induced geodesic distance $d_{g_k}$ has diameter
 \[
  \operatorname{diam}(\Hkbar,d_{g_k})=\pi\sqrt V,
  \qquad V:=\int_X\omega^n.
 \]
\end{theorem}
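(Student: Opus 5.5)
The plan is to realize $g_k$ as the pullback of the Fisher--Rao metric under the Hessian density map, and then read off every metric property from the spherical model.

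\textbf{Step 1 (the density map is a diffeomorphism).} For $\varphi\in\Hk$ set
\[
 f_\varphi:=\frac{\omega_\varphi^k\wedge\omega^{n-k}}{\omega^n}.
\]
Since $\omega_\varphi$ is $k$-positive, $f_\varphi>0$. Moreover $\int_X f_\varphi\,\omega^n=V$, because $\ddc\varphi$ is exact and $X$ is closed. By the solvability of the complex $k$-Hessian equation (Dinew--Kołodziej), every smooth positive density of mass $V$ is attained, uniquely up to an additive constant. So $\varphi\mapsto f_\varphi$ is a bijection from $\Hkbar$ onto $\DV$, the space of smooth positive densities of mass $V$.

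\textbf{Step 2 (the pulled-back metric).} Differentiating at $\varphi$ in a direction $u$ gives
\[
 \delta f=\frac{k\,\ddc u\wedge\omega_\varphi^{k-1}\wedge\omega^{n-k}}{\omega^n}
 =(\Delta^{(k)}_\varphi u)\,f_\varphi,
\]
where $\Delta^{(k)}_\varphi$ is the linearized $k$-Hessian operator divided by the density. This operator is elliptic on $k$-admissible potentials, and its kernel consists of the constants (maximum principle, $X$ connected). Its image is the set of functions with $\int (\cdot)\,f_\varphi\omega^n=0$, by Fredholm theory applied to the divergence-form operator $u\mapsto k\,\ddc u\wedge\omega_\varphi^{k-1}\wedge\omega^{n-k}$.

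Hence the differential is an isomorphism from $C^\infty/\mathbb R$ onto $T\DV$, and the inverse function theorem in the smooth Fréchet category (Nash--Moser, or directly via the continuity method) makes the map a diffeomorphism. Define $g_k$ as the pullback of the Fisher--Rao metric $\int (\delta f)^2/f\,\omega^n$; this yields the formula $G^{\HC,k}$ stated in the abstract. The metric is positive definite on the quotient by the kernel computation. For $k=n$ it is Calabi's metric.

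\textbf{Step 3 (spherical model).} Put $s=2\sqrt{f}$. Then $\delta s=\delta f/\sqrt f$, so $\int(\delta s)^2\omega^n$ equals the Fisher--Rao norm. Also $\int s^2\omega^n=4V$, so $s$ lies on the sphere of radius $R=2\sqrt V$ in $L^2(\omega^n)$, and by Step 1 its image is exactly the positive smooth part $S^+$. Thus $\Hkbar$ is isometric to $S^+$ with the induced metric.

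The curvature follows from the Gauss equation for a round sphere in a Hilbert space, which gives $\operatorname{Sec}=1/R^2=1/(4V)$. The Levi-Civita connection exists here as tangential projection of the flat derivative, so the computation is legitimate in this weak setting.

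\textbf{Step 4 (diameter; the main obstacle).} For $s_0,s_1\in S^+$ let $\theta=\arccos\bigl(\langle s_0,s_1\rangle/R^2\bigr)$. We have $\theta<\pi/2$, indeed $\le\pi/2$, since $\langle s_0,s_1\rangle>0$.

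\emph{Upper bound.} The great-circle arc $\cos t\,s_0+\sin t\,w$ is a positive combination of $s_0$ and $s_1$, hence stays in $S^+$. So $d\le R\theta<R\pi/2=\pi\sqrt V$.

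\emph{Lower bound on distances.} Any curve in $S^+$ is a curve on the full sphere, whose intrinsic distance is $R\theta$. Hence $d=R\theta$ exactly.

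\emph{Supremum.} Take $s_0,s_1$ smooth positive with nearly disjoint supports, for instance mass concentrating on disjoint balls with a tiny positive floor. Then $\langle s_0,s_1\rangle\to0$, so $\theta\to\pi/2$ and $\operatorname{diam}=R\pi/2=\pi\sqrt V$, which is not attained.

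The delicate points are the justification of the diffeomorphism and tangent-space identification in Step 2 (uniqueness and ellipticity for $k<n$), and this concentration construction, which must keep densities smooth, positive and of mass $V$.
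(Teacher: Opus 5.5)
Your proposal is correct and follows essentially the same route as the paper. You pull back the Fisher--Rao metric by the Hessian density map (a bijection onto densities of mass $V$ by Dinew--Ko\l odziej), identify $\Hkbar$ with the positive part of the $L^2$-sphere of radius $2\sqrt V$ via $2\sqrt{f}$, obtain curvature $1/(4V)$ by projecting the flat connection, and get diameter $\pi\sqrt V$ from $\langle s_0,s_1\rangle>0$ together with nearly disjointly supported densities. Two small remarks: your explicit lower bound $d\ge R\theta$ (comparing with the full sphere) is a step the paper leaves implicit, and the garbled phrase ``$\theta<\pi/2$, indeed $\le\pi/2$'' should simply read $\theta<\pi/2$.
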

 {\bf Acknowledgements}:
The authors  would like to thank Professors Haozhao Li for his long-time support and many useful suggestions.\\
  {\bf On the usage of AI}:
The ideas presented in this paper are entirely due to the author. ChatGPT Plus was used to simplify the proof of Theorem~\ref{thm:adjacent-distance-comparison}. The authors take full responsibility for the contents of the paper.

\section{Background}

Let $(X,\omega)$ be a compact connected K\"ahler manifold of complex
dimension $n$, and fix an integer $1\leq k\leq n$. For
$\varphi\in\Cinf(X,\R)$ we write
\begin{equation}\label{eq:omega-phi}
 \omega_{\varphi}=\omega+\ddc\varphi,
 \qquad
 V:=\int_X\omega^n.
\end{equation}
The purpose of this section is to recall the ingredients used below:
the solvability of the complex $k$-Hessian equation, the space of
$k$-Hessian potentials of Guo--Smith--Tong, the Fisher--Rao metric on
positive densities, and the geometry of the associated positive
$L^2$-sphere.

\subsection{The complex \texorpdfstring{$k$}{k}-Hessian equation}

Let $\sigma_j$ denote the $j$-th elementary symmetric polynomial and set
\begin{equation}\label{eq:garding-cone}
 \Gamma_k
 :=\left\{\lambda\in\R^n:
 \sigma_j(\lambda)>0,\quad 1\leq j\leq k\right\}.
\end{equation}
A smooth real $(1,1)$-form $\alpha$ is called \emph{$k$-positive with
respect to $\omega$} if
\[
 \lambda(\omega^{-1}\alpha)\in\Gamma_k.
\]
Equivalently,
\begin{equation}\label{eq:k-positive}
 \alpha^j\wedge\omega^{n-j}>0,
 \qquad 1\leq j\leq k.
\end{equation}

We shall use the non-degenerate complex $k$-Hessian equation
\begin{equation}\label{eq:hessian-equation}
 \omega_{\varphi}^{k}\wedge\omega^{n-k}
 =F\,\omega^n,
 \qquad F>0.
\end{equation}
Integrating \eqref{eq:hessian-equation} gives the necessary compatibility
condition
\begin{equation}\label{eq:mass-condition}
 \int_XF\,\omega^n
 =\int_X\omega_{\varphi}^{k}\wedge\omega^{n-k}
 =\int_X\omega^n
 =V.
\end{equation}
The smooth non-degenerate complex Hessian equation on a compact K\"ahler
manifold is solvable in full generality; see Dinew--Ko\l odziej
\cite{DK2017}, completing the program initiated by Hou--Ma--Wu
\cite{HMW2010}. We record the form needed in this paper.

\begin{theorem}[Complex $k$-Hessian solvability]\label{thm:hessian-solvability}
Let $(X,\omega)$ be a compact K\"ahler manifold. Suppose
$F\in\Cinf(X)$ satisfies
\[
 F>0,
 \qquad
 \int_XF\,\omega^n=V.
\]
Then there exists a smooth $k$-admissible function $\varphi$, unique modulo
constants, such that
\[
 \omega_{\varphi}^{k}\wedge\omega^{n-k}=F\,\omega^n.
\]
\end{theorem}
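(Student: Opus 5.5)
The plan is to argue by the continuity method, with uniqueness handled separately by a maximum-principle argument; this is the route of Hou--Ma--Wu \cite{HMW2010}, with the decisive gradient estimate supplied by Dinew--Ko\l odziej \cite{DK2017}.

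\emph{Uniqueness.} Suppose $\varphi_1,\varphi_2$ are $k$-admissible and $\omega_{\varphi_1}^k\wedge\omega^{n-k}=\omega_{\varphi_2}^k\wedge\omega^{n-k}$. Then
\[
0=\int_0^1\frac{d}{dt}\,\omega_{\varphi_t}^k\wedge\omega^{n-k}\,dt,
\qquad \varphi_t=t\varphi_1+(1-t)\varphi_2 .
\]
Since $\Gamma_k$ is convex, every $\varphi_t$ is $k$-admissible. Writing $\psi=\varphi_1-\varphi_2$, this identity becomes
\[
L\psi:=\int_0^1 k\,\ddc\psi\wedge\omega_{\varphi_t}^{k-1}\wedge\omega^{n-k}\,dt=0 .
\]
By G\aa rding's theory the form $\omega_{\varphi_t}^{k-1}\wedge\omega^{n-k}$ is strictly positive on $\Gamma_k$, so $L$ is a second-order linear elliptic operator with no zeroth-order term. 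The strong maximum principle on the compact connected manifold $X$ then forces $\psi$ to be constant.

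\emph{Existence.} Set $\varphi_0=0$ and $F_0=1$, and consider the path $F_t=(1-t)+tF$. Each $F_t$ is positive and has total mass $V$. Let $S\subset[0,1]$ be the set of $t$ for which $\omega_{\varphi}^k\wedge\omega^{n-k}=F_t\omega^n$ has a smooth admissible solution normalized by $\sup\varphi=0$; clearly $0\in S$.

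\emph{Openness.} Linearize at a solution. The linearized operator is $L_\varphi u=k\,\ddc u\wedge\omega_\varphi^{k-1}\wedge\omega^{n-k}/\omega^n$, which is elliptic with kernel the constants. By Stokes, $\int_X L_\varphi u\,\omega^n=0$, so its image in the mean-zero H\"older functions is closed of codimension one. The implicit function theorem on the spaces $C^{2,\alpha}$ modulo constants and $C^{\alpha}$ with mass $V$ then gives openness, and smoothness follows by elliptic bootstrapping.

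\emph{Closedness.} This requires uniform $C^{2,\alpha}$ bounds along the path, obtained in the following order.
\begin{itemize}
\item[(i)] A $C^0$ bound, via Ko\l odziej-type pluripotential estimates for $(\omega,k)$-subharmonic functions, or alternatively by a Moser/ABP argument.
\item[(ii)] The Hou--Ma--Wu second-order estimate $\sup|\ddc\varphi|\le C(1+\sup|\nabla\varphi|^2)$. Its proof is a maximum-principle argument for $\log\lambda_{\max}$ plus an auxiliary function of $|\nabla\varphi|^2$ and $\varphi$, and it uses the concavity of $\sigma_k^{1/k}$.
\item[(iii)] A gradient bound. This is obtained by the blow-up argument of Dinew--Ko\l odziej: if $\sup|\nabla\varphi|$ were unbounded, rescaling would produce a nonconstant bounded $k$-subharmonic entire function on $\mathbb C^n$ with vanishing Hessian measure and bounded gradient, which contradicts a Liouville theorem for $k$-subharmonic functions.
\item[(iv)] Higher regularity. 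Evans--Krylov for the concave operator $\sigma_k^{1/k}$ gives $C^{2,\alpha}$, and Schauder theory gives the rest.
\end{itemize}

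The main obstacle is step (iii): the direct maximum principle does not yield a gradient bound in the Hessian case, and it is exactly the Liouville-type blow-up argument that closes the estimates. Since this is precisely the content of \cite{DK2017} combined with \cite{HMW2010}, in the paper I would cite these results for the a priori estimates rather than reprove them, and include only the uniqueness argument and the continuity-path setup explicitly.
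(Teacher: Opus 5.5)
Your proposal is correct and matches the route the paper relies on: the paper does not prove this theorem itself but cites Dinew--Ko\l odziej's blow-up/Liouville gradient estimate as completing the Hou--Ma--Wu continuity-method program, which is exactly the outline you give, together with the standard maximum-principle uniqueness argument. One small precision: in the openness step, the fact that $L_\varphi$ maps $C^{2,\alpha}$ modulo constants \emph{onto} the mean-zero $C^{\alpha}$ functions comes from its formal self-adjointness with respect to $\omega^n$ (Fredholm index zero with kernel $\R$), not from the Stokes identity alone.
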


In the projective setting, Chen--Nie--Xu recently obtained a uniform
Nakai--Moishezon type numerical criterion for complex Hessian type equations
\cite{CNX2026}. In particular, for the pure complex $k$-Hessian equation,
their criterion relates the existence of a smooth $\Gamma_k$-admissible
representative, solvability for normalized smooth positive right-hand sides,
and uniform numerical inequalities on irreducible subvarieties. In the
present paper the class is $[\omega]$ itself; since $\omega$ is K\"ahler, a
smooth $\Gamma_k$-admissible representative is already available. Thus their
result gives the same solvability conclusion in the projective case. We use
Theorem~\ref{thm:hessian-solvability} below so that no projectivity assumption
is required.

\subsection{The space of \texorpdfstring{$k$}{k}-Hessian potentials}

Following Guo--Smith--Tong \cite{GST2023}, define
\begin{equation}\label{eq:Hk-definition}
 \Hk
 :=\left\{\varphi\in\Cinf(X,\R):
 \omega_{\varphi}^{j}\wedge\omega^{n-j}>0,
 \quad 1\leq j\leq k\right\}.
\end{equation}
Equivalently,
\[
 \lambda(\omega^{-1}\omega_{\varphi})\in\Gamma_k.
\]
Since $\Gamma_k$ is open, $\Hk$ is an open subset of
$\Cinf(X,\R)$ and, formally,
\begin{equation}\label{eq:tangent-Hk}
 T_{\varphi}\Hk\simeq\Cinf(X,\R).
\end{equation}

Guo--Smith--Tong introduced on $\Hk$ the Hessian analogue of the Mabuchi
$L^2$ metric,
\begin{equation}\label{eq:GST-metric}
 G^{\HM,k}_{\varphi}(u,v)
 :=\int_Xuv\,\omega_{\varphi}^{k}\wedge\omega^{n-k},
\end{equation}
and studied its geometry in connection with the Hessian-cscK equation
\cite{GST2023}. When $k=n$, \eqref{eq:GST-metric} reduces to the classical
Mabuchi metric.

All the constructions considered below depend only on $\omega_{\varphi}$ and
are therefore invariant under addition of constants. We consequently work on
the quotient
\begin{equation}\label{eq:Hk-quotient}
 \Hkbar:=\Hk/\R,
\end{equation}
with tangent space
\begin{equation}\label{eq:tangent-Hkbar}
 T_{[\varphi]}\Hkbar
 \simeq \Cinf(X,\R)/\R.
\end{equation}

\subsection{The Fisher metric on positive densities}

Let
\begin{equation}\label{eq:density-space}
 \DV
 :=\left\{F\in\Cinf(X):
 F>0,\quad \int_XF\,\omega^n=V\right\}.
\end{equation}
The tangent space at $F\in\DV$ is
\begin{equation}\label{eq:tangent-DV}
 T_F\DV
 =\left\{h\in\Cinf(X):
 \int_Xh\,\omega^n=0\right\}.
\end{equation}
The Fisher metric is
\begin{equation}\label{eq:fisher-rao}
 G_F^{\FR}(h_1,h_2)
 :=\int_X\frac{h_1h_2}{F}\,\omega^n.
\end{equation}

Its spherical form is obtained from the square-root map
\begin{equation}\label{eq:square-root-map}
 \mathcal S:\DV\longrightarrow L^2(X,\omega^n),
 \qquad
 \mathcal S(F):=2\sqrt F.
\end{equation}
Indeed,
\[
 \|\mathcal S(F)\|_{L^2(\omega^n)}^2
 =4\int_XF\,\omega^n
 =4V.
\]
Hence $\mathcal S$ maps $\DV$ onto the positive part
\begin{equation}\label{eq:positive-sphere}
 S_{2\sqrt V}^{+}
 :=\left\{q\in\Cinf(X):
 q>0,\quad \int_Xq^2\omega^n=4V\right\}
\end{equation}
of the $L^2(X,\omega^n)$-sphere of radius $2\sqrt V$. Moreover,
\begin{equation}\label{eq:dS}
 D\mathcal S_F(h)=\frac{h}{\sqrt F},
\end{equation}
and therefore
\begin{equation}\label{eq:FR-sphere-isometry}
 \left\langle D\mathcal S_F(h_1),D\mathcal S_F(h_2)
 \right\rangle_{L^2(\omega^n)}
 =G_F^{\FR}(h_1,h_2).
\end{equation}
For $k=n$, this square-root representation is the mechanism behind
Calamai's spherical description of the Calabi metric \cite{Calamai2012}.

\subsection{The connection and curvature of the positive \texorpdfstring{$L^2$}{L2}-sphere}
\label{subsec:positive-sphere-connection}

In this subsection we recall the natural flat connection on the space of
smooth functions, its compatibility with the fixed $L^2$ metric, and the
Levi-Civita connection and curvature of the positive smooth $L^2$-sphere.
This also makes precise why the constant-curvature formula for a Hilbert
sphere continues to hold on the smooth positive locus, which is a
Fr\'echet rather than a Hilbert manifold.

Let $E:=\Cinf(X,\R)$, regarded as a Fr\'echet vector space.  Its linear
structure identifies $T_fE$ canonically with $E$ at every $f\in E$.
Equip $E$ with the fixed weak $L^2$ Riemannian metric
\begin{equation}\label{eq:flat-L2-metric}
 G^{L^2}_f(h_1,h_2)
 :=\langle h_1,h_2\rangle_2
 :=\int_X h_1h_2\,\omega^n,
 \qquad f\in E.
\end{equation}
  If $A,Y:E\to E$ are smooth vector fields, define
the natural flat connection by
\begin{equation}\label{eq:flat-D-definition}
 (D_A Y)_f:=dY_f(A(f))
  =\left.\frac{d}{dt}\right|_{t=0}Y(f+tA(f)).
\end{equation}
Here $dY_f$ is the derivative of a map between Fr\'echet spaces,
and the derivative on the right is taken in $E$, not on the base manifold
$X$.  In particular, $D_AY$ need not vanish even though the ambient
connection is flat.

We verify metric compatibility explicitly.  For smooth vector fields
$A,Y,Z$ on $E$, differentiation under the integral (using compactness
of $X$) and the product rule give
\begin{align}
 A\bigl(G^{L^2}(Y,Z)\bigr)\big|_f
 &=\left.\frac{d}{dt}\right|_{t=0}
   \int_X Y(f+tA(f))Z(f+tA(f))\,\omega^n \notag\\
 &=\int_X (D_AY)_f\,Z(f)\,\omega^n
   +\int_XY(f)\,(D_AZ)_f\,\omega^n \notag\\
 &=G^{L^2}_f((D_AY)_f,Z(f))
   +G^{L^2}_f(Y(f),(D_AZ)_f).
 \label{eq:flat-D-metric-compatible}
\end{align}
Thus $DG^{L^2}=0$.  For example, fix $h\in E$ and let
$A(f)=h$, $Y(f)=f^2$, and $Z(f)=f$.  Then
\[
 (D_AY)_f=2fh,\qquad (D_AZ)_f=h,
\]
so both sides of \eqref{eq:flat-D-metric-compatible} equal
\[
 3\int_X f^2h\,\omega^n
 =2\int_Xf^2h\,\omega^n+\int_Xf^2h\,\omega^n.
\]
The usual directional derivative on a vector space is also torsion-free
and flat:
\[
 D_AY-D_YA=[A,Y],\qquad
 D_AD_YZ-D_YD_AZ-D_{[A,Y]}Z=0.
\]
Consequently, $D$ is the Levi-Civita connection of this fixed weak
$L^2$ metric (its existence here follows from the explicit formula,
without invoking a general existence theorem for weak metrics).

Set $R:=2\sqrt V$ and write $S_R^+$ for the smooth positive sphere
\eqref{eq:positive-sphere}.  Since $X$ is compact, the condition $q>0$
is open in the smooth topology.  Differentiating the constraint
$\langle q,q\rangle_2=R^2$ gives
\begin{equation}\label{eq:positive-sphere-tangent}
 T_qS_R^+=\{h\in E:\langle q,h\rangle_2=0\}.
\end{equation}
The $L^2$-orthogonal projection from $E$ onto this smooth hyperplane is
\begin{equation}\label{eq:positive-sphere-projection}
 P_q(v)=v-\frac{\langle v,q\rangle_2}{R^2}q.
\end{equation}
For smooth tangent vector fields $A,Y$ on $S_R^+$, the ambient
derivative $D_AY$ is defined by differentiating $Y(q(t))$ along a curve
$q(t)$ in $S_R^+$ with $q'(0)=A(q)$.  Define the induced connection by
$\nabla_A Y:=P_q(D_A Y)$.  Since $\langle q,Y(q)\rangle_2=0$,
differentiation in the direction $A$ yields
$\langle q,D_A Y\rangle_2=-\langle A,Y\rangle_2$, and hence
\begin{equation}\label{eq:positive-sphere-LC}
 \boxed{\quad\nabla_A Y
  =D_A Y+\frac{\langle A,Y\rangle_2}{R^2}q.\quad}
\end{equation}
This expression remains smooth and tangent to $S_R^+$.  Projection of
the ambient metric-compatibility and torsion identities shows that
$\nabla$ is metric-compatible and torsion-free; it is therefore the
Levi-Civita connection of the induced weak metric on $S_R^+$.

\begin{proposition}[Curvature of the positive smooth sphere]
\label{prop:positive-sphere-curvature}
With the curvature convention
$\mathcal R(A,B)Z=\nabla_A\nabla_BZ-\nabla_B\nabla_AZ
-\nabla_{[A,B]}Z$, the induced connection on $S_R^+$ satisfies
\begin{equation}\label{eq:positive-sphere-curvature}
 \mathcal R(A,B)Z
 =\frac1{R^2}\bigl(\langle B,Z\rangle_2 A
                     -\langle A,Z\rangle_2 B\bigr).
\end{equation}
In particular, every two-plane spanned by linearly independent smooth
tangent vectors has sectional curvature
\[
 \operatorname{Sec}_{S_R^+}=\frac1{R^2}=\frac1{4V}.
\]
\end{proposition}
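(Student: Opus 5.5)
We prove \eqref{eq:positive-sphere-curvature} by a direct computation with the explicit Levi-Civita formula \eqref{eq:positive-sphere-LC}, i.e.\ the Gauss equation done by hand. No Hilbert-space tools are needed, since everything is a pointwise identity in $E$ at a fixed $q\in S_R^+$. We write $\nabla_BZ=D_BZ+R^{-2}\langle B,Z\rangle_2\,\iota$, where $\iota(q)=q$ is the position vector field; this expression is again a smooth tangent field on $S_R^+$. We then apply $\nabla_A$ using \eqref{eq:positive-sphere-LC} once more:
\[
 \nabla_A\nabla_BZ
 =D_A D_BZ+\frac{1}{R^2}A\bigl(\langle B,Z\rangle_2\bigr)q
 +\frac{1}{R^2}\langle B,Z\rangle_2\,D_A\iota
 +\frac{1}{R^2}\bigl\langle A,\nabla_BZ\bigr\rangle_2\,q .
\]

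\emph{Step 1: simplify the terms.} We have $D_A\iota=A$, because the derivative of the identity map is the identity. By metric compatibility \eqref{eq:flat-D-metric-compatible}, $A\langle B,Z\rangle_2=\langle D_AB,Z\rangle_2+\langle B,D_AZ\rangle_2$. Since $A$ is tangent, $\langle A,q\rangle_2=0$, and therefore $\langle A,\nabla_BZ\rangle_2=\langle A,D_BZ\rangle_2$. Note that $D_AD_BZ$ requires $D_BZ$ to be a vector field defined off the sphere. To handle this, we either extend $A,B,Z$ smoothly to a neighbourhood of $S_R^+$ in the open set $\{q>0\}\subset E$, for instance homogeneously of degree zero, or we interpret all ambient derivatives along curves in $S_R^+$, as is already done in defining $D_AY$. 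The result does not depend on the extension, because only tangential derivatives of tangential data enter.

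\emph{Step 2: antisymmetrize.} We subtract the same expression with $A$ and $B$ swapped, and also subtract $\nabla_{[A,B]}Z=D_{[A,B]}Z+R^{-2}\langle[A,B],Z\rangle_2\,q$. Flatness of $D$ cancels $D_AD_BZ-D_BD_AZ-D_{[A,B]}Z$. The $q$-coefficient then collects the terms
\[
 \langle D_AB-D_BA-[A,B],Z\rangle_2
 +\langle B,D_AZ\rangle_2-\langle A,D_BZ\rangle_2
 +\langle A,D_BZ\rangle_2-\langle B,D_AZ\rangle_2 .
\]
The first term vanishes by torsion-freeness of $D$, and the remaining four cancel in pairs. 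What survives is $R^{-2}\bigl(\langle B,Z\rangle_2A-\langle A,Z\rangle_2B\bigr)$, which is \eqref{eq:positive-sphere-curvature}.

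\emph{Step 3: sectional curvature.} For linearly independent tangent vectors $A,B$ we compute
\[
 \langle\mathcal R(A,B)B,A\rangle_2=R^{-2}\bigl(\|A\|_2^2\|B\|_2^2-\langle A,B\rangle_2^2\bigr).
\]
The factor in parentheses is strictly positive by Cauchy--Schwarz with equality only in the dependent case, which uses only that $\langle\cdot,\cdot\rangle_2$ is positive definite on smooth functions. Dividing by it gives $\operatorname{Sec}=R^{-2}=1/(4V)$.

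The only delicate point is Step 1's justification that iterated ambient derivatives and the flatness and torsion identities of $D$ may be applied to fields defined only on $S_R^+$. I would handle it via the degree-zero extension $\tilde Y(f)=Y\bigl(Rf/\|f\|_2\bigr)$ on the open cone $\{f>0\}$, where all identities hold in the Fréchet sense.
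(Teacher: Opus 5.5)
Your proof is correct and is essentially the paper's argument: both apply the explicit Levi-Civita formula to $\nabla_BZ=D_BZ+R^{-2}\langle B,Z\rangle_2\,q$, use $D_Aq=A$, and let flatness of $D$ cancel the second-order terms. The only difference is bookkeeping. The paper writes $\nabla_A=P_q\circ D_A$, so the $q$-components are killed by the projection at once and only flatness is needed. You keep those components and check that their coefficient vanishes, using metric compatibility and torsion-freeness; your degree-zero extension is a harmless alternative to the paper's differentiation along curves in $S_R^+$.
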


\begin{proof}
The position vector field on $S_R^+$ is $q\mapsto q$, so $D_Aq=A$.
Using \eqref{eq:positive-sphere-LC} and
$P_q(q)=0$, $P_q(A)=A$ for tangent $A$, we obtain
\begin{align*}
 \nabla_A\nabla_B Z
 &=P_q\!\left(D_A\left(D_BZ
      +\frac{\langle B,Z\rangle_2}{R^2}q\right)\right)\\
 &=P_q(D_AD_BZ)+\frac{\langle B,Z\rangle_2}{R^2}A.
\end{align*}
Interchanging $A$ and $B$ and subtracting
$\nabla_{[A,B]}Z=P_q(D_{[A,B]}Z)$, the ambient flatness of $D$
cancels the projected second-derivative terms and gives
\eqref{eq:positive-sphere-curvature}.  In particular,
\[
 \langle\mathcal R(A,B)B,A\rangle_2
 =\frac1{R^2}
   \bigl(\|A\|_2^2\|B\|_2^2-\langle A,B\rangle_2^2\bigr).
\]
Dividing by the Gram determinant in parentheses gives the stated
sectional curvature.  Thus the result is obtained within the smooth
Fr\'echet category.
\end{proof}

\section{The Hessian density map and the induced Fisher metric}

We now transfer the Fisher--Rao geometry of $\DV$ to the space of
$k$-Hessian potentials.

\subsection{The Hessian density map}

Define
\begin{equation}\label{eq:Tk-definition}
 \Tk:\Hkbar\longrightarrow\DV,
 \qquad
 \Tk([\varphi])=F_{\varphi}
 :=\frac{\omega_{\varphi}^{k}\wedge\omega^{n-k}}{\omega^n}.
\end{equation}
The map is well defined because the right-hand side is unchanged by adding a
constant to $\varphi$. Since $\varphi$ is $k$-admissible,
$F_{\varphi}>0$. Moreover,
\begin{equation}\label{eq:Tk-mass}
 \int_XF_{\varphi}\omega^n
 =\int_X\omega_{\varphi}^{k}\wedge\omega^{n-k}
 =\int_X\omega^n
 =V,
\end{equation}
where the middle equality follows from cohomology. Thus
$F_{\varphi}\in\DV$.

\begin{proposition}\label{prop:Tk-bijection}
The Hessian density map
\[
 \Tk:\Hkbar\longrightarrow\DV
\]
is a bijection.
\end{proposition}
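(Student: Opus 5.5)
The plan is to derive both surjectivity and injectivity directly from Theorem~\ref{thm:hessian-solvability}. The only extra work is to check that the notion of ``$k$-admissible'' used there agrees with membership in $\Hk$ as defined in \eqref{eq:Hk-definition}. Well-definedness of $\Tk$ and the inclusion $\Tk(\Hkbar)\subset\DV$ were already established in \eqref{eq:Tk-mass}, so only bijectivity remains.

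\emph{Surjectivity.} Fix $F\in\DV$, so $F$ is smooth, $F>0$, and $\int_XF\,\omega^n=V$. These are exactly the hypotheses of Theorem~\ref{thm:hessian-solvability}. It produces a smooth $k$-admissible $\varphi$ with $\omega_\varphi^k\wedge\omega^{n-k}=F\,\omega^n$. Here $k$-admissible means $\lambda(\omega^{-1}\omega_\varphi)\in\Gamma_k$. By \eqref{eq:k-positive} this is equivalent to $\omega_\varphi^j\wedge\omega^{n-j}>0$ for $1\le j\le k$, so $\varphi\in\Hk$. Then $\Tk([\varphi])=F$.

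\emph{Injectivity.} Suppose $\Tk([\varphi_1])=\Tk([\varphi_2])=:F$ with $\varphi_1,\varphi_2\in\Hk$. Both $\varphi_i$ are then smooth $k$-admissible solutions of the same equation $\omega_{\varphi}^k\wedge\omega^{n-k}=F\,\omega^n$, and $F\in\DV$. The uniqueness clause of Theorem~\ref{thm:hessian-solvability} gives $\varphi_1-\varphi_2\in\R$, hence $[\varphi_1]=[\varphi_2]$.

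Should one prefer not to rely on the uniqueness clause as a black box, I would prove it by the standard comparison argument. Put $\psi=\varphi_1-\varphi_2$ and expand
\[
0=\omega_{\varphi_1}^k\wedge\omega^{n-k}-\omega_{\varphi_2}^k\wedge\omega^{n-k}
=\ddc\psi\wedge\sum_{j=0}^{k-1}\omega_{\varphi_1}^{j}\wedge\omega_{\varphi_2}^{k-1-j}\wedge\omega^{n-k}.
\]
This is a linear second-order equation $L\psi=0$. Its coefficient form is strictly positive, which follows from G\aa rding's theory: the linearization $\sigma_k^{ij}$ is positive definite on $\Gamma_k$, and mixed products of $\Gamma_k$-forms wedged with $\omega^{n-k}$ are positive, since $\Gamma_k$ is convex. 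So $L$ is elliptic with no zeroth-order term. On compact connected $X$, the strong maximum principle then forces $\psi$ to be constant.

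This ellipticity of the mixed linearization is the only nontrivial point. Since the cited theorem already includes uniqueness, I expect the proof to amount to invoking Theorem~\ref{thm:hessian-solvability} together with the equivalence \eqref{eq:k-positive}.
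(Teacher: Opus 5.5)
Your proposal is correct and is essentially the paper's proof: surjectivity comes from the existence part of Theorem~\ref{thm:hessian-solvability}, and injectivity from its uniqueness-modulo-constants clause; your check that $k$-admissibility coincides with membership in $\Hk$ is the equivalence the paper uses implicitly. Your optional self-contained uniqueness argument is also sound, since the telescoped coefficient form equals $k\int_0^1\omega_{\varphi_t}^{k-1}\,dt\wedge\omega^{n-k}$ with $\varphi_t=t\varphi_1+(1-t)\varphi_2$, and this form is strictly positive by convexity of $\Gamma_k$.
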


\begin{proof}
Let $F\in\DV$. By Theorem~\ref{thm:hessian-solvability}, there exists a
smooth $k$-admissible function $\varphi$, unique modulo constants, satisfying
\[
 \omega_{\varphi}^{k}\wedge\omega^{n-k}=F\,\omega^n.
\]
Thus $\Tk([\varphi])=F$, proving surjectivity. Uniqueness modulo constants
gives injectivity.
\end{proof}

\subsection{Linearization of the Hessian density map}

Let $\varphi_s=\varphi+su$, where $u\in\Cinf(X,\R)$. Differentiating
\eqref{eq:Tk-definition} at $s=0$ gives
\begin{equation}\label{eq:dTk-explicit}
 D\Tk\big|_{\varphi}(u)
 =k\,
 \frac{\ddc u\wedge\omega_{\varphi}^{k-1}\wedge\omega^{n-k}}
 {\omega^n}.
\end{equation}
It is convenient to introduce the normalized linearized $k$-Hessian
operator
\begin{equation}\label{eq:k-Hessian-Laplacian}
 \Delta_{\varphi}^{(k)}u
 :=k\,
 \frac{\ddc u\wedge\omega_{\varphi}^{k-1}\wedge\omega^{n-k}}
 {\omega_{\varphi}^{k}\wedge\omega^{n-k}}.
\end{equation}
Then
\begin{equation}\label{eq:dTk-factorization}
 D\Tk\big|_{\varphi}(u)
 =F_{\varphi}\,\Delta_{\varphi}^{(k)}u.
\end{equation}
Since $\omega_{\varphi}\in\Gamma_k$, the corresponding Newton tensor is
positive definite; in particular, $\Delta_{\varphi}^{(k)}$ is elliptic.

\begin{proposition}\label{prop:dTk-isomorphism}
For every $\varphi\in\Hk$, the differential
\[
 D\Tk\big|_{\varphi}:
 \Cinf(X,\R)/\R
 \longrightarrow
 T_{F_{\varphi}}\DV
\]
is an isomorphism.
\end{proposition}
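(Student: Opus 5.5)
We need to show that $D\Tk|_{\varphi}$ is well defined on the quotient, lands in $T_{F_\varphi}\DV$, is injective, and is surjective. The natural tool is the linear elliptic operator
\[
L_\varphi u := k\,\frac{\ddc u\wedge\omega_{\varphi}^{k-1}\wedge\omega^{n-k}}{\omega^n}
=F_\varphi\,\Delta^{(k)}_\varphi u ,
\]
which by \eqref{eq:dTk-explicit} and \eqref{eq:dTk-factorization} is exactly $D\Tk|_{\varphi}$.

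\textbf{Well-definedness and range.} Constants are killed by $\ddc$, so $L_\varphi$ descends to $\Cinf(X,\R)/\R$. The form $\omega_\varphi^{k-1}\wedge\omega^{n-k}$ is closed, so
\[
\int_X L_\varphi u\,\omega^n=k\int_X \ddc u\wedge\omega_\varphi^{k-1}\wedge\omega^{n-k}=0
\]
by Stokes' theorem. Hence $L_\varphi u\in T_{F_\varphi}\DV$ as described in \eqref{eq:tangent-DV}.

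\textbf{Structure of the operator.} In local coordinates $L_\varphi u=a^{i\bar j}u_{i\bar j}$, where $a^{i\bar j}$ is, up to a positive factor, the Newton tensor $\partial\sigma_k/\partial\lambda$ evaluated at $\omega_\varphi$ relative to $\omega$. This tensor is positive definite because $\lambda(\omega^{-1}\omega_\varphi)\in\Gamma_k$, as noted after \eqref{eq:k-Hessian-Laplacian}. So $L_\varphi$ is a second-order, uniformly elliptic operator with smooth coefficients and no zeroth-order term. Moreover it is formally self-adjoint with respect to $\omega^n$: for all smooth $u,v$,
\[
\int_X v\,L_\varphi u\,\omega^n=k\int_X v\,\ddc u\wedge\omega_\varphi^{k-1}\wedge\omega^{n-k}
=k\int_X u\,\ddc v\wedge\omega_\varphi^{k-1}\wedge\omega^{n-k},
\]
by integrating by parts twice and using closedness of $\omega_\varphi^{k-1}\wedge\omega^{n-k}$.

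\textbf{Injectivity.} Suppose $L_\varphi u=0$. The strong maximum principle applies to an elliptic operator without zeroth-order term on the compact connected manifold $X$. At an interior maximum $u$ must therefore be constant, so the kernel of $L_\varphi$ consists of the constants. Alternatively, integrating $u\,L_\varphi u$ by parts gives $-k\int_X \sqrt{-1}\,\partial u\wedge\bar\partial u\wedge\omega_\varphi^{k-1}\wedge\omega^{n-k}=0$. Positivity of the Newton tensor forces $du=0$, and connectedness again gives $u$ constant.

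\textbf{Surjectivity.} This is the main point. We use Fredholm theory for elliptic operators on compact manifolds, working in Sobolev spaces $W^{s+2,2}\to W^{s,2}$. Since $L_\varphi$ is elliptic, it is Fredholm. Since it is self-adjoint with respect to $\omega^n$, its range is the $L^2(\omega^n)$-orthogonal complement of its kernel, namely the functions of zero mean. So for any smooth $h$ with $\int_X h\,\omega^n=0$ there is a Sobolev solution $u$ of $L_\varphi u=h$. Elliptic regularity then makes $u$ smooth. Together with injectivity, this shows $D\Tk|_\varphi$ is a linear bijection $\Cinf(X,\R)/\R\to T_{F_\varphi}\DV$.

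The only delicate step is establishing self-adjointness and exact ellipticity, since the Fredholm argument depends on both. Both follow from the closedness of $\omega_\varphi^{k-1}\wedge\omega^{n-k}$ and the positivity of the Newton tensor on $\Gamma_k$, which we take as given, as the paper does after \eqref{eq:k-Hessian-Laplacian}.
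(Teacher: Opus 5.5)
Your proposal is correct and follows essentially the same route as the paper: Stokes' theorem for the zero-mean property, formal self-adjointness of $L_\varphi$ with respect to $\omega^n$, the maximum principle for $\ker L_\varphi=\R$, and Fredholm theory plus elliptic regularity for surjectivity onto the zero-mean functions. The differences are cosmetic. You work in Sobolev rather than H\"older spaces, and you add an alternative energy argument for injectivity using $\int_X \sqrt{-1}\,\partial u\wedge\bar\partial u\wedge\omega_\varphi^{k-1}\wedge\omega^{n-k}$; neither changes the substance of the argument.
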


\begin{proof}
First, since
$\omega_{\varphi}^{k-1}\wedge\omega^{n-k}$ is closed,
\begin{equation}\label{eq:dTk-zero-mean}
 \int_XD\Tk\big|_{\varphi}(u)\,\omega^n
 =k\int_X\ddc u\wedge
 \omega_{\varphi}^{k-1}\wedge\omega^{n-k}
 =0.
\end{equation}
Thus
\[
 \operatorname{Im}D\Tk\big|_{\varphi}
 \subseteq T_{F_{\varphi}}\DV.
\]

Set
\[
 L_{\varphi}u:=D\Tk\big|_{\varphi}(u).
\]
For $u,v\in\Cinf(X,\R)$, integration by parts gives
\begin{align}
 \int_XvL_{\varphi}u\,\omega^n
 &=k\int_Xv\,\ddc u\wedge
   \omega_{\varphi}^{k-1}\wedge\omega^{n-k}\notag\\
 &=k\int_Xu\,\ddc v\wedge
   \omega_{\varphi}^{k-1}\wedge\omega^{n-k}\notag\\
 &=\int_XuL_{\varphi}v\,\omega^n.
 \label{eq:self-adjoint}
\end{align}
Hence $L_{\varphi}$ is formally self-adjoint with respect to the fixed
measure $\omega^n$. Ellipticity and the maximum principle imply
\begin{equation}\label{eq:L-kernel}
 \ker L_{\varphi}=\R.
\end{equation}
Working first in H\"older spaces, elliptic Fredholm theory together with
\eqref{eq:self-adjoint} yields
\[
 \operatorname{Im}L_{\varphi}
 =\left\{h\in C^{m,\alpha}(X):
 \int_Xh\,\omega^n=0\right\}.
\]
After quotienting the domain by constants, $L_{\varphi}$ is therefore an
isomorphism onto the tangent space in \eqref{eq:tangent-DV}. Elliptic
regularity gives the corresponding statement in the smooth category.
\end{proof}

\begin{remark}\label{rem:global-coordinate}
Propositions~\ref{prop:Tk-bijection} and \ref{prop:dTk-isomorphism} show that
the Hessian density provides a global coordinate on $\Hkbar$. If one wishes
to make the differential-manifold statement precise, one may first work in
H\"older completions, apply the Banach inverse function theorem, and then use
elliptic regularity to return to the smooth category.
\end{remark}

\subsection{The pulled-back Fisher metric}

We now define the metric of interest.

\begin{definition}\label{def:Hessian-Calabi-metric}
The \emph{$k$-Hessian Calabi metric} on $\Hkbar$ is the pullback of the
Fisher--Rao metric by the Hessian density map:
\begin{equation}\label{eq:pullback-definition}
 G^{\HC,k}_{\varphi}(u,v)
 :=G_{F_{\varphi}}^{\FR}
 \bigl(D\Tk|_{\varphi}(u),D\Tk|_{\varphi}(v)\bigr).
\end{equation}
\end{definition}

\begin{proposition}\label{prop:metric-formula}
For $u,v\in\Cinf(X,\R)/\R$,
\begin{equation}\label{eq:Hessian-Calabi-metric}
 G^{\HC,k}_{\varphi}(u,v)
 =\int_X
 \bigl(\Delta_{\varphi}^{(k)}u\bigr)
 \bigl(\Delta_{\varphi}^{(k)}v\bigr)
 \,\omega_{\varphi}^{k}\wedge\omega^{n-k}.
\end{equation}
In particular, $G^{\HC,k}$ is positive definite on $\Hkbar$.
\end{proposition}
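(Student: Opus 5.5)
The formula follows by substituting the factorization \eqref{eq:dTk-factorization} into Definition~\ref{def:Hessian-Calabi-metric}. Writing $D\Tk|_{\varphi}(u)=F_{\varphi}\Delta_{\varphi}^{(k)}u$ and likewise for $v$, the Fisher--Rao expression \eqref{eq:fisher-rao} becomes
\[
 \int_X\frac{F_{\varphi}^2\,\Delta_{\varphi}^{(k)}u\,\Delta_{\varphi}^{(k)}v}{F_{\varphi}}\,\omega^n
 =\int_X\Delta_{\varphi}^{(k)}u\,\Delta_{\varphi}^{(k)}v\,F_{\varphi}\,\omega^n .
\]
Since $F_{\varphi}\omega^n=\omega_{\varphi}^k\wedge\omega^{n-k}$ by \eqref{eq:Tk-definition}, this is \eqref{eq:Hessian-Calabi-metric}.

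Before this, I will check that the expression is well defined on $\Cinf(X,\R)/\R$ and on $\Hkbar$. Adding a constant to $u$ or $v$ does not change $\ddc u$, hence does not change $\Delta_{\varphi}^{(k)}u$. Replacing $\varphi$ by $\varphi+c$ does not change $\omega_{\varphi}$. So both sides descend to the quotient.

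For positive definiteness, the first point is that $\omega_{\varphi}^k\wedge\omega^{n-k}=F_{\varphi}\omega^n$ with $F_{\varphi}>0$. Hence
\[
 G^{\HC,k}_{\varphi}(u,u)=\int_X\bigl(\Delta_{\varphi}^{(k)}u\bigr)^2F_{\varphi}\,\omega^n\ \geq 0 .
\]
Suppose this quantity vanishes. Because $\Delta_{\varphi}^{(k)}u$ is smooth and $F_{\varphi}>0$, it follows that $\Delta_{\varphi}^{(k)}u\equiv0$, and therefore $L_{\varphi}u=F_{\varphi}\Delta_{\varphi}^{(k)}u=0$. By \eqref{eq:L-kernel}, $u$ is constant, i.e.\ $u=0$ in $\Cinf(X,\R)/\R$.

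An equivalent route is to note that $G^{\HC,k}_{\varphi}$ is the pullback of the positive definite form $G^{\FR}_{F_{\varphi}}$ under the injective linear map $D\Tk|_{\varphi}$ of Proposition~\ref{prop:dTk-isomorphism}. Positive definiteness is then immediate from that injectivity.

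There is no real obstacle here. The only substantive input is the kernel statement \eqref{eq:L-kernel}, which rests on ellipticity of $\Delta_{\varphi}^{(k)}$ together with the strong maximum principle on the compact connected manifold $X$. I take it as already established in Proposition~\ref{prop:dTk-isomorphism}.
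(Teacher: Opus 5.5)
Your proof is correct and follows the paper's argument: you substitute $D\Tk|_{\varphi}(u)=F_{\varphi}\Delta^{(k)}_{\varphi}u$ into the Fisher--Rao form and use $F_{\varphi}\omega^n=\omega_{\varphi}^k\wedge\omega^{n-k}$, then get positive definiteness because $\Delta^{(k)}_{\varphi}u\equiv0$ forces $u$ to be constant by ellipticity and the maximum principle. Your added check of well-definedness on the quotient and the alternative route via injectivity of $D\Tk|_{\varphi}$ are both valid.
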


\begin{proof}
Using \eqref{eq:fisher-rao} and \eqref{eq:dTk-factorization}, we obtain
\begin{align*}
 G^{\HC,k}_{\varphi}(u,v)
 &=\int_X
 \frac{F_{\varphi}^2
 \bigl(\Delta_{\varphi}^{(k)}u\bigr)
 \bigl(\Delta_{\varphi}^{(k)}v\bigr)}{F_{\varphi}}\,\omega^n\\
 &=\int_X
 \bigl(\Delta_{\varphi}^{(k)}u\bigr)
 \bigl(\Delta_{\varphi}^{(k)}v\bigr)
 \,\omega_{\varphi}^{k}\wedge\omega^{n-k}.
\end{align*}
If $G^{\HC,k}_{\varphi}(u,u)=0$, then
$\Delta_{\varphi}^{(k)}u=0$. By ellipticity and the maximum principle,
$u$ is constant, hence zero in the quotient tangent space.
\end{proof}

\begin{remark}\label{rem:k-equals-n}
When $k=n$, the operator \eqref{eq:k-Hessian-Laplacian} is the ordinary
K\"ahler Laplacian, up to the convention used for $dd^c$, and
\eqref{eq:Hessian-Calabi-metric} becomes
\[
 G^{\HC,n}_{\varphi}(u,v)
 =\int_X
 (\Delta_{\omega_{\varphi}}u)
 (\Delta_{\omega_{\varphi}}v)
 \,\omega_{\varphi}^n,
\]
which is the classical Calabi metric studied by Calamai
\cite{Calamai2012}.
\end{remark}

\subsection{The square-root Hessian map}

Define
\begin{equation}\label{eq:Qk-definition}
 \Qk:\Hkbar\longrightarrow L^2(X,\omega^n),
 \qquad
 \Qk([\varphi])
 :=2\sqrt{F_{\varphi}}
 =2\sqrt{\frac{\omega_{\varphi}^{k}\wedge\omega^{n-k}}{\omega^n}}.
\end{equation}
By \eqref{eq:Tk-mass},
\begin{equation}\label{eq:Qk-norm}
 \|\Qk([\varphi])\|_{L^2(\omega^n)}^2
 =4\int_XF_{\varphi}\omega^n
 =4V.
\end{equation}
Moreover, $\Qk([\varphi])>0$. Hence
\[
 \Qk(\Hkbar)\subseteq S_{2\sqrt V}^{+}.
\]
Differentiating \eqref{eq:Qk-definition} gives
\begin{equation}\label{eq:dQk}
 D\Qk\big|_{\varphi}(u)
 =\frac{D\Tk|_{\varphi}(u)}{\sqrt{F_{\varphi}}}
 =\sqrt{F_{\varphi}}\,\Delta_{\varphi}^{(k)}u.
\end{equation}
Consequently,
\begin{align}
 \left\langle D\Qk|_{\varphi}(u),D\Qk|_{\varphi}(v)
 \right\rangle_{L^2(\omega^n)}
 &=\int_XF_{\varphi}
 \bigl(\Delta_{\varphi}^{(k)}u\bigr)
 \bigl(\Delta_{\varphi}^{(k)}v\bigr)\,\omega^n\notag\\
 &=G^{\HC,k}_{\varphi}(u,v).
 \label{eq:Qk-local-isometry}
\end{align}

\begin{theorem}\label{thm:global-isometry}
The square-root Hessian density map
\[
 \Qk:
 (\Hkbar,G^{\HC,k})
 \longrightarrow
 S_{2\sqrt V}^{+}\subset L^2(X,\omega^n)
\]
is a global Riemannian isometry.
\end{theorem}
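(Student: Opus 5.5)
The plan is to write $\Qk=\mathcal S\circ\Tk$ and read off each property of a global isometry from results already established. Here $\mathcal S$ is the square-root map \eqref{eq:square-root-map}. ``Global Riemannian isometry'' has three parts: $\Qk$ is a bijection onto $S_{2\sqrt V}^{+}$; its differential is a linear isomorphism onto $T_qS_{2\sqrt V}^{+}$ at every point; and it pulls the induced $L^2$ metric back to $G^{\HC,k}$.

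For the bijection, Proposition~\ref{prop:Tk-bijection} gives that $\Tk:\Hkbar\to\DV$ is bijective. It then suffices to check that $\mathcal S:\DV\to S_{2\sqrt V}^{+}$ is bijective. Injectivity holds because $F\mapsto 2\sqrt F$ is injective on positive functions. For surjectivity, take $q\in S_{2\sqrt V}^{+}$ and set $F:=q^2/4$. Then $F$ is smooth and positive, and
\[
 \int_XF\,\omega^n=\tfrac14\int_Xq^2\omega^n=V,
\]
so $F\in\DV$ with $\mathcal S(F)=q$. Smoothness of $\sqrt F$ is automatic because $F>0$ on a compact $X$.

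For the infinitesimal statement, I would use the chain rule and \eqref{eq:dQk} to get $D\Qk|_\varphi=D\mathcal S_{F_\varphi}\circ D\Tk|_\varphi$. Proposition~\ref{prop:dTk-isomorphism} shows that $D\Tk|_\varphi$ is an isomorphism of $\Cinf(X,\R)/\R$ onto the mean-zero functions. By \eqref{eq:dS}, $D\mathcal S_F(h)=h/\sqrt F$. This is an isomorphism from $T_F\DV$ onto $\{w:\langle w,\sqrt F\rangle_2=0\}$, since $\int_X (h/\sqrt F)\sqrt F\,\omega^n=\int_X h\,\omega^n$. That set equals $T_qS_{2\sqrt V}^{+}$ for $q=2\sqrt F$ by \eqref{eq:positive-sphere-tangent}, and the inverse is $w\mapsto\sqrt F\,w$. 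Metric preservation is exactly \eqref{eq:Qk-local-isometry}. Equivalently, it follows from combining Definition~\ref{def:Hessian-Calabi-metric} with \eqref{eq:FR-sphere-isometry}.

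The main obstacle is making sense of ``global'' in the Fréchet category. We must make sure the inverse $\Qk^{-1}$ is smooth, not merely that $\Qk$ is a smooth bijection with invertible differential, since the inverse function theorem is not available for Fréchet spaces. I would handle this as in Remark~\ref{rem:global-coordinate}. The map $\mathcal S^{-1}(q)=q^2/4$ is explicitly smooth. For $\Tk^{-1}$, I would work in Hölder completions $C^{m+2,\alpha}\to C^{m,\alpha}$ and apply the Banach inverse function theorem, using Proposition~\ref{prop:dTk-isomorphism}. Elliptic regularity shows that the local inverses are compatible across $m$ and preserve smoothness, so $\Tk^{-1}$ is smooth on $\DV$. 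The resulting diffeomorphism preserves the metric, which gives a global isometry. As a consequence, one can transport the Levi-Civita connection \eqref{eq:positive-sphere-LC} and use Proposition~\ref{prop:positive-sphere-curvature} later.
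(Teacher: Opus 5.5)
Your proposal is correct and follows the paper's proof: you factor $\Qk=\mathcal S\circ\Tk$, get bijectivity from Proposition~\ref{prop:Tk-bijection} together with the explicit inverse $q\mapsto q^2/4$ of $\mathcal S$, and get metric preservation from \eqref{eq:Qk-local-isometry}. Your extra checks, that $D\Qk|_\varphi$ maps isomorphically onto $T_qS_{2\sqrt V}^{+}$ and that $\Tk^{-1}$ is smooth via H\"older completions, go beyond the paper's two-line proof, which leaves the smoothness point to Remark~\ref{rem:global-coordinate}.
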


\begin{proof}
Equation \eqref{eq:Qk-local-isometry} shows that $\Qk$ preserves the
Riemannian metric. On the other hand,
\[
 \Qk=\mathcal S\circ\Tk.
\]
The map $\Tk$ is a bijection by Proposition~\ref{prop:Tk-bijection}, while
$\mathcal S(F)=2\sqrt F$ is a bijection from $\DV$ onto
$S_{2\sqrt V}^{+}$. Hence $\Qk$ is bijective and therefore a global
Riemannian isometry.
\end{proof}

\begin{remark}\label{rem:comparison-GST}
Theorem~\ref{thm:global-isometry} gives a useful contrast with the
Hessian--Mabuchi geometry of Guo--Smith--Tong. Their metric
\eqref{eq:GST-metric} is a zeroth-order $L^2$ metric built directly from the
Hessian measure, whereas \eqref{eq:Hessian-Calabi-metric} is obtained by
pulling back the Fisher--Rao metric under the Hessian density map. Thus the
former is naturally viewed as a $k$-Hessian analogue of the Mabuchi metric,
while the latter is a $k$-Hessian analogue of the Calabi metric. When $k=n$,
Theorem~\ref{thm:global-isometry} reduces to the spherical model of Calamai
\cite{Calamai2012}. Consequently, the geodesic, distance, curvature and
metric-completion questions for $G^{\HC,k}$ can be transferred to the
geometry of the positive $L^2$-sphere.
\end{remark}

\section{Some Corollaries}
\label{sec:some-applications}

Throughout this section, we use the global isometry in
Theorem~\ref{thm:global-isometry}. Set
\[
 R:=2\sqrt V,
 \qquad
 q_{\varphi}:=\Qk([\varphi])=2\sqrt{F_{\varphi}}.
\]
Thus $q_{\varphi}$ belongs to the positive part of the $L^2$-sphere of
radius $R$. The geometric consequences below follow from the corresponding
facts for a round sphere, with the qualification that the geodesic must
remain in the pointwise-positive locus.

\begin{corollary}[Constant sectional curvature]
\label{cor:constant-sectional-curvature}
The metric $G^{\HC,k}$ on $\Hkbar$ has constant sectional curvature
\[
 \operatorname{Sec}_{G^{\HC,k}}=\frac{1}{4V}.
\]
\end{corollary}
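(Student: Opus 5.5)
The plan is to transport the curvature computation of Proposition~\ref{prop:positive-sphere-curvature} to $\Hkbar$ through the global isometry $\Qk$ of Theorem~\ref{thm:global-isometry}. Sectional curvature is determined by the Levi-Civita connection, and that connection is determined by the metric through its explicit formula. So an isometry intertwines the two curvature tensors. Concretely, for tangent vectors $u,v\in\Cinf(X,\R)/\R$ at $[\varphi]$ we will have
\[
 \operatorname{Sec}_{G^{\HC,k}}(u,v)
 =\operatorname{Sec}_{S_R^+}\bigl(D\Qk|_{\varphi}(u),D\Qk|_{\varphi}(v)\bigr)
 =\frac1{R^2}=\frac1{4V},
\]
with $R=2\sqrt V$. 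The expected difficulty is that these are weak metrics on Fr\'echet manifolds. In that setting a Levi-Civita connection need not exist in general, and "isometries preserve curvature" cannot be quoted from finite-dimensional theory. I will handle this by pulling the connection back explicitly rather than invoking abstract uniqueness.

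First, I will use Propositions~\ref{prop:Tk-bijection} and~\ref{prop:dTk-isomorphism}, together with Remark~\ref{rem:global-coordinate}, to regard $\Qk=\mathcal S\circ\Tk$ as a diffeomorphism $\Hkbar\to S_R^+$. The differential of $\mathcal S$ is invertible by \eqref{eq:dS}, and the inverse $q\mapsto q^2/4$ is smooth, so the composite is a diffeomorphism. Under it, smooth vector fields on $\Hkbar$ correspond bijectively to smooth tangent vector fields on $S_R^+$ via $\widetilde A=D\Qk(A)$, and Lie brackets are preserved: $D\Qk[A,B]=[\widetilde A,\widetilde B]$. That is the chain rule for Fr\'echet-smooth maps between open subsets of tame spaces.

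Second, I will define a connection on $\Hkbar$ by
\[
 \nabla^{\HC}_AB:=(D\Qk)^{-1}\bigl(\nabla_{\widetilde A}\widetilde B\bigr),
\]
where $\nabla$ is the connection \eqref{eq:positive-sphere-LC}. Because $\Qk$ preserves the metric by \eqref{eq:Qk-local-isometry} and preserves brackets, $\nabla^{\HC}$ inherits metric compatibility and torsion-freeness from $\nabla$. It is therefore a Levi-Civita connection of $G^{\HC,k}$. Uniqueness still holds for weak metrics, since the Koszul formula determines $G^{\HC,k}(\nabla^{\HC}_AB,C)$ for all $C$, and the weak metric is nondegenerate by Proposition~\ref{prop:metric-formula}. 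So this is \emph{the} Levi-Civita connection, and its curvature tensor satisfies
\[
 \mathcal R^{\HC}(A,B)C=(D\Qk)^{-1}\,\mathcal R(\widetilde A,\widetilde B)\widetilde C.
\]

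Finally, I will apply \eqref{eq:positive-sphere-curvature} and the pointwise identity $G^{\HC,k}(u,v)=\langle D\Qk(u),D\Qk(v)\rangle_2$. These give
\[
 G^{\HC,k}\bigl(\mathcal R^{\HC}(u,v)v,u\bigr)
 =\frac1{4V}\Bigl(G^{\HC,k}(u,u)\,G^{\HC,k}(v,v)-G^{\HC,k}(u,v)^2\Bigr)
\]
for any linearly independent $u,v$. Linear independence is preserved because $D\Qk$ is injective, so the Gram determinant is nonzero. Dividing by it yields the constant value $\frac1{4V}$.

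The only real obstacle is the first step: making the pullback of vector fields and brackets rigorous through the inverse of $\Qk$ on Fr\'echet spaces. I would first try to reduce it to H\"older completions via the Banach inverse function theorem and elliptic regularity, as in Remark~\ref{rem:global-coordinate}. Alternatively, I would work with local extensions of tangent vectors and use that curvature is tensorial.
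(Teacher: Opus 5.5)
Your proposal is correct and follows the paper's route. The paper's proof does this in two lines: it combines the global isometry $\Qk$ of Theorem~\ref{thm:global-isometry} with the curvature computation of Proposition~\ref{prop:positive-sphere-curvature}, while you additionally make explicit the pullback of the Levi-Civita connection, its uniqueness via the Koszul formula for the weak metric, and the Fr\'echet diffeomorphism issue that the paper leaves to Remark~\ref{rem:global-coordinate}.
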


\begin{proof}
By Theorem~\ref{thm:global-isometry},
$(\Hkbar,G^{\HC,k})$ is isometric to the positive smooth
$L^2(X,\omega^n)$-sphere of radius $R=2\sqrt V$.
Proposition~\ref{prop:positive-sphere-curvature} computes the
sectional curvature of its induced weak $L^2$ metric as $R^{-2}$; hence
\[
 R^{-2}=(2\sqrt V)^{-2}=\frac{1}{4V}.
\]
\end{proof}

\begin{corollary}[The geodesic initial value problem]
\label{cor:geodesic-IVP}
Fix $[\varphi_0]\in\Hkbar$ and
$u_0\in T_{[\varphi_0]}\Hkbar$. Put
\[
 q_0:=\Qk([\varphi_0]),\qquad
 h_0:=D\Qk|_{\varphi_0}(u_0),\qquad
 s:=\|h_0\|_{L^2(\omega^n)}
   =\sqrt{G^{\HC,k}_{\varphi_0}(u_0,u_0)}.
\]
If $s>0$, define
\begin{equation}\label{eq:geodesic-sphere-IVP}
 q(t)=
 \cos\!\left(\frac{s}{R}t\right)q_0
 +\frac{R}{s}\sin\!\left(\frac{s}{R}t\right)h_0,
 \qquad R=2\sqrt V.
\end{equation}
If $s=0$, set $q(t)=q_0$. The unique geodesic with initial data
$[\varphi(0)]=[\varphi_0]$ and $\dot{[\varphi]}(0)=u_0$ is given by
\[
 [\varphi(t)]=\Qk^{-1}(q(t))
\]
on the maximal connected time interval containing $0$ on which
$q(t,x)>0$ for every $x\in X$.

In particular, if $s>0$, this geodesic cannot exist for all
$t\in\mathbb R$.
\end{corollary}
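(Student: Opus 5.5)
The plan is to transport the problem to the sphere via the global isometry of Theorem~\ref{thm:global-isometry}. First I check that $q(t)$ is a geodesic of $S_R^+$ in the sense of the Levi-Civita connection \eqref{eq:positive-sphere-LC}. For this I need $\langle q_0,h_0\rangle_2=0$. Since $q_0=2\sqrt{F_{\varphi_0}}$ and $h_0=\sqrt{F_{\varphi_0}}\Delta^{(k)}_{\varphi_0}u_0$, this inner product is $2\int_X D\Tk|_{\varphi_0}(u_0)\,\omega^n$, which vanishes by \eqref{eq:dTk-zero-mean}. Using also $\|q_0\|_2=R$ and $\|h_0\|_2=s$, one gets $\|q(t)\|_2^2=R^2$ for all $t$, so $q(t)$ stays on the sphere. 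Moreover $\ddot q=-(s/R)^2q$ and $\|\dot q\|_2^2=s^2$. Hence
\[
 \nabla_{\dot q}\dot q=\ddot q+\frac{\|\dot q\|_2^2}{R^2}q=0 .
\]
Thus $q$ is a geodesic on every interval where it lies in the positive locus. Let $I$ be the maximal open interval containing $0$ on which $\min_X q(t,\cdot)>0$. This set is open because $X$ is compact and $q$ depends continuously on $t$ in $C^0$. On $I$, $q(t)\in S_R^+$ and it is smooth in $(t,x)$. Setting $[\varphi(t)]:=\Qk^{-1}(q(t))$ and using that $\Qk$ is an isometry (with $\Qk^{-1}$ smooth by Remark~\ref{rem:global-coordinate}), I get that $[\varphi(t)]$ is a $G^{\HC,k}$-geodesic with $[\varphi(0)]=[\varphi_0]$. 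Its initial velocity is the $u$ with $D\Qk|_{\varphi_0}(u)=\dot q(0)=h_0$, which is $u_0$ by Proposition~\ref{prop:dTk-isomorphism}.

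Uniqueness is the main obstacle, since no ODE theorem is available on a Fréchet manifold. I would argue directly on the sphere. Let $p(t)$ be any geodesic of $S_R^+$ with $p(0)=q_0$ and $\dot p(0)=h_0$. The geodesic equation says $\ddot p=-\lambda(t)p$ in $E$, where $\lambda=\|\dot p\|_2^2/R^2$. Since $\nabla$ is metric-compatible, $\|\dot p\|_2$ is constant, so $\lambda\equiv s^2/R^2$. The equation then becomes the linear constant-coefficient ODE $\ddot p+(s/R)^2p=0$ with values in $E$. Pair it with a fixed test function $\psi\in E$: the scalar function $\langle p(t),\psi\rangle_2$ solves a scalar linear ODE with fixed initial data, so it agrees with $\langle q(t),\psi\rangle_2$. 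As $\psi$ is arbitrary, $p=q$. Pulling back through the bijective isometry $\Qk$ gives uniqueness in $\Hkbar$ on any interval where the geodesic exists. Consequently no geodesic with these data can extend past $I$, because $q(t)$ leaves the positive locus at the endpoints.

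Finally, suppose $s>0$. Then
\[
 q\!\left(\frac{\pi R}{s}\right)=-q_0<0
\]
everywhere on $X$. So $\pi R/s\notin I$, and $I$ is bounded above; by symmetry it is also bounded below. Hence the geodesic is not defined for all $t\in\R$. In the case $s=0$ we have $h_0=0$, so $u_0$ is constant by Proposition~\ref{prop:metric-formula}, hence zero in the quotient, and the constant curve is the geodesic.
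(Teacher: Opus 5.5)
Your proposal is correct and follows the paper's route. You transport the problem to $S_R^+$ through the global isometry $\Qk$, recognize $q(t)$ as the great circle, pull it back, and show that it leaves the positive locus. Two details differ from the paper, and both of your versions suffice:
\begin{itemize}
\item \emph{Uniqueness.} The paper only asserts it. Your reduction to the linear equation $\ddot p+(s/R)^2p=0$, tested against arbitrary $\psi\in\Cinf(X,\R)$, supplies the missing argument.
\item \emph{Exit time.} You use $t=\pi R/s$, where $q=-q_0$. The paper uses the sharper time $t_*=\pi R/(2s)$, where $q=(R/s)h_0$ already changes sign.
\end{itemize}
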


\begin{proof}
The differential of $\Qk$ is an isometry by
\eqref{eq:Qk-local-isometry}. In particular,
\[
 \langle q_0,h_0\rangle_{L^2(\omega^n)}=0,
 \qquad \|q_0\|_{L^2(\omega^n)}=R,
 \qquad \|h_0\|_{L^2(\omega^n)}=s.
\]
Formula~\eqref{eq:geodesic-sphere-IVP} is the standard great-circle solution
on the radius-$R$ sphere with initial point $q_0$ and initial velocity
$h_0$. Whenever $q(t)$ remains strictly positive, it lies in
$S_R^+$, and the global isometry gives the stated geodesic in $\Hkbar$.
Conversely, any geodesic in $\Hkbar$ is carried by $\Qk$ to this spherical
geodesic, which proves uniqueness and the maximal-interval statement.

For the final assertion, suppose $s>0$. Since $q_0>0$ and
$\langle q_0,h_0\rangle_{L^2}=0$, the nonzero function $h_0$ must take both
positive and negative values. At
\[
 t_*=\frac{\pi R}{2s},
\]
formula~\eqref{eq:geodesic-sphere-IVP} gives
\[
 q(t_*)=\frac{R}{s}h_0,
\]
which is not strictly positive. Hence the spherical geodesic leaves the
positive locus no later than $t_*$, and the corresponding geodesic in
$\Hkbar$ cannot be extended for all positive time.
\end{proof}

\begin{corollary}[Distance formula and diameter]
\label{cor:distance-diameter}
For any $[\varphi_0],[\varphi_1]\in\Hkbar$, write
$F_i=F_{\varphi_i}$ for $i=0,1$. Then
\begin{equation}\label{eq:HC-distance-formula}
 d_{\HC,k}([\varphi_0],[\varphi_1])
 =2\sqrt V\,
 \arccos\!\left(
 \frac{1}{V}\int_X\sqrt{F_0F_1}\,\omega^n
 \right).
\end{equation}
Consequently,
\begin{equation}\label{eq:HC-diameter}
 \operatorname{diam}(\Hkbar,d_{\HC,k})=\pi\sqrt V.
\end{equation}
The diameter is a supremum and is not attained by any pair of points.
\end{corollary}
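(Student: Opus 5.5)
By Theorem~\ref{thm:global-isometry}, $\Qk$ is a global isometry onto $S_R^+$ with $R=2\sqrt V$. Hence $d_{\HC,k}([\varphi_0],[\varphi_1])$ equals the intrinsic geodesic distance $d_{S_R^+}(q_0,q_1)$, where $q_i=2\sqrt{F_i}$. This is the infimum of $L^2$-lengths of smooth curves lying in the \emph{positive} sphere. The plan is to show that this intrinsic distance equals the great-circle distance
\[
 R\arccos\!\left(\frac{\langle q_0,q_1\rangle_2}{R^2}\right)
 =2\sqrt V\arccos\!\left(\frac{1}{V}\int_X\sqrt{F_0F_1}\,\omega^n\right),
\]
using $\langle q_0,q_1\rangle_2=4\int_X\sqrt{F_0F_1}\,\omega^n$ and $R^2=4V$.

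\emph{Lower bound.} Let $q(t)$, $t\in[0,1]$, be any smooth curve in $S_R^+$ joining $q_0$ to $q_1$. I would prove that its $L^2$-length is at least the angle times $R$. Write $\theta(t)=\arccos(\langle q(t),q_0\rangle_2/R^2)$. Split $\dot q$ into its component along the plane direction toward $q_0$ and the orthogonal remainder, exactly as in the finite-dimensional proof. This gives $|\dot\theta|\le\|\dot q\|_2/R$ wherever $\theta\in(0,\pi)$. The same inequality follows directly from Cauchy--Schwarz applied to $\frac{d}{dt}\langle q,q_0\rangle_2=\langle \dot q,P_q q_0\rangle_2$, together with $\|P_qq_0\|_2=R\sin\theta$. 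Integrating yields $\mathrm{Length}(q)\ge R\,\theta(1)$. Only the Hilbert-space structure of $L^2$ is used, so the Fréchet setting causes no difficulty.

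\emph{Upper bound.} If $q_1=q_0$ there is nothing to prove. Otherwise, the great-circle arc
\[
 \gamma(t)=\frac{\sin((1-t)\theta)}{\sin\theta}q_0+\frac{\sin(t\theta)}{\sin\theta}q_1,
 \qquad \theta\in(0,\pi),
\]
is a combination of $q_0$ and $q_1$ with nonnegative coefficients that never vanish simultaneously. Since $q_0,q_1>0$ pointwise, $\gamma(t)>0$ for all $t\in[0,1]$, so $\gamma\subset S_R^+$. We also need $\theta<\pi$, which holds because $\langle q_0,q_1\rangle_2>0$; indeed $\theta<\pi/2$. The length of $\gamma$ is $R\theta$, and $\Qk^{-1}\circ\gamma$ is an admissible path in $\Hkbar$. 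This proves \eqref{eq:HC-distance-formula}. The main subtlety is this positivity check, because it ensures the minimizing arc never leaves the positive locus. It is the one place where the positive sphere differs from the full sphere, and convexity of the positive cone handles it.

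\emph{Diameter.} Since $\int_X\sqrt{F_0F_1}\,\omega^n>0$, every distance is strictly less than $2\sqrt V\cdot\frac{\pi}{2}=\pi\sqrt V$, so the supremum is not attained. For the supremum itself, choose densities with nearly disjoint supports. Fix disjoint small balls $B_0,B_1\subset X$ and take $F_i^\epsilon\in\DV$ smooth with $F_i^\epsilon\ge\epsilon$, concentrating mass $V-O(\epsilon)$ on $B_i$, and of order $\epsilon$ elsewhere. Using Cauchy--Schwarz on each region, $\int\sqrt{F_0^\epsilon F_1^\epsilon}\,\omega^n$ is bounded by $\sqrt{\epsilon\,V}$-type terms on $B_0\cup B_1$ plus $O(\epsilon)$ elsewhere. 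It therefore tends to $0$, so the distances tend to $2\sqrt V\arccos 0=\pi\sqrt V$. Each $F_i^\epsilon$ lies in the image of $\Tk$ by Proposition~\ref{prop:Tk-bijection}, so these points belong to $\Hkbar$. This gives \eqref{eq:HC-diameter}.
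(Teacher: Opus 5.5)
Your proposal is correct and follows the paper's proof. You use the global isometry to pass to $S_R^+$, show the shorter great-circle arc stays pointwise positive because $\theta<\pi/2$ and it is a nonnegative combination of $q_0,q_1$, and get the diameter from strict positivity of $\int_X\sqrt{F_0F_1}\,\omega^n$ together with nearly disjointly concentrated densities. The one addition is your explicit Cauchy--Schwarz lower bound $|\dot\theta|\le\|\dot q\|_2/R$, which the paper leaves implicit when it asserts that the intrinsic distance in $S_R^+$ equals $R\theta$; when integrating it, start from the last time $\theta$ vanishes, so that $\theta$ is differentiable on the remaining interval.
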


\begin{proof}
Set $q_i=\Qk([\varphi_i])=2\sqrt{F_i}$. Since $q_0,q_1$ are strictly
positive and have $L^2$-norm $R=2\sqrt V$, their spherical angle
\[
 \theta=\arccos\!\left(
 \frac{\langle q_0,q_1\rangle_{L^2(\omega^n)}}{R^2}
 \right)
 =\arccos\!\left(
 \frac{1}{V}\int_X\sqrt{F_0F_1}\,\omega^n
 \right)
\]
satisfies $0\leq\theta<\pi/2$. The shorter great-circle arc joining
$q_0$ to $q_1$ stays strictly positive: for $0\leq t\leq1$, it is
\[
 q_t=
 \frac{\sin((1-t)\theta)}{\sin\theta}\,q_0
 +\frac{\sin(t\theta)}{\sin\theta}\,q_1
\]
when $\theta>0$, with the constant path understood when $\theta=0$.
Thus the intrinsic distance in $S_R^+$ equals the spherical distance
$R\theta$, which proves \eqref{eq:HC-distance-formula}.

Strict positivity implies
$\int_X\sqrt{F_0F_1}\,\omega^n>0$, so every pair satisfies
\[
 d_{\HC,k}([\varphi_0],[\varphi_1])<\frac{\pi R}{2}
 =\pi\sqrt V.
\]
On the other hand, one can choose pairs of smooth strictly positive
densities of mass $V$ whose square roots become concentrated in
essentially disjoint regions, so that
$\int_X\sqrt{F_0F_1}\,\omega^n\to0$. For example, start with nonnegative
smooth functions supported in disjoint open sets, add a positive constant
tending to zero, and normalize their integrals to $V$. By
Proposition~\ref{prop:Tk-bijection}, each such density is the Hessian
density of a point of $\Hkbar$. Their distances therefore approach
$\pi R/2=\pi\sqrt V$. This proves \eqref{eq:HC-diameter}; the strict
inequality for each pair shows that the diameter is not attained.
\end{proof}

\section{Comparison of adjacent Hessian--Fisher distances}
\label{sec:adjacent-distances}

The preceding construction places the Hessian density maps of different
orders in the same Fisher space. We now compare the corresponding
pullback distances on a $C^2$-small neighborhood of the reference K\"ahler
metric. Unlike a comparison of path lengths along a prescribed curve, the
result below concerns the original geodesic distances on the full spaces
of admissible potentials.

For $1\leq j\leq n$, write
\begin{equation}\label{eq:comparison-FQ}
 F_j(\varphi)
 :=\frac{\omega_{\varphi}^{j}\wedge\omega^{n-j}}{\omega^n},
 \qquad
 Q_j([\varphi]):=2\sqrt{F_j(\varphi)},
\end{equation}
and let $d_j:=d_{\HC,j}$ denote the geodesic distance induced by
$G^{\HC,j}$ on $\widetilde{\mathcal H}_j(X,\omega)$. All these maps take
values in the same positive $L^2$-sphere of radius $R=2\sqrt V$. For
$\delta>0$, define
\begin{equation}\label{eq:comparison-Udelta}
 \mathcal U_\delta
 :=\left\{[\varphi]\in\widetilde{\mathcal H}_n(X,\omega):
 \frac{1}{1+\delta}\,\omega
 <\omega_{\varphi}<
 (1+\delta)\,\omega\right\}.
\end{equation}
Here inequalities between real $(1,1)$-forms are in the sense of positive
definiteness. In particular, $\mathcal U_\delta$ is convex under affine
interpolation of representatives and is contained in every admissible
potential space.

\begin{theorem}[Local comparison of adjacent Hessian--Fisher--Rao distances]
\label{thm:adjacent-distance-comparison}
Let $(X,\omega)$ be a compact K\"ahler manifold of complex dimension $n$,
and let $1\leq k<k+1\leq n$. There are constants
$\delta_0=\delta_0(n,k)>0$ and $C=C(n,k)>0$ such that, for
$0<\delta<\delta_0$ and any distinct
$[\varphi_0],[\varphi_1]\in\mathcal U_\delta$, one has
\begin{equation}\label{eq:comparison-main-ratio}
 \left|
 \frac{d_{k+1}([\varphi_0],[\varphi_1])}
      {d_k([\varphi_0],[\varphi_1])}
 -\frac{k+1}{k}\right|\leq C\delta.
\end{equation}
In particular, after possibly decreasing $\delta_0$,
\begin{equation}\label{eq:comparison-strict}
 d_{k+1}([\varphi_0],[\varphi_1])
 >d_k([\varphi_0],[\varphi_1]).
\end{equation}
The distances here are those on the full spaces
$\widetilde{\mathcal H}_{k+1}$ and $\widetilde{\mathcal H}_k$,
respectively, evaluated at endpoints in $\mathcal U_\delta$; paths are
\emph{not} required to stay in $\mathcal U_\delta$.
\end{theorem}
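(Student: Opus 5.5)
By Corollary~\ref{cor:distance-diameter}, applied to each order $j$, the distance $d_j$ depends only on the endpoint densities:
\[
 d_j([\varphi_0],[\varphi_1])=R\,\theta_j,
 \qquad
 \cos\theta_j=\frac1V\int_X\sqrt{F_j(\varphi_0)F_j(\varphi_1)}\,\omega^n .
\]
This already handles the caveat that paths need not stay in $\mathcal U_\delta$. The inner product is only used for endpoints in $\mathcal U_\delta\subset\widetilde{\mathcal H}_j$.

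I would trade the angle for the Hellinger distance. Since $\|q\|_2=R$,
\[
 2-2\cos\theta_j=\frac1{4V}\,\|Q_j(\varphi_1)-Q_j(\varphi_0)\|_2^2=:\frac{H_j^2}{V},
 \qquad
 H_j:=\bigl\|\sqrt{F_j(\varphi_1)}-\sqrt{F_j(\varphi_0)}\bigr\|_{L^2(\omega^n)}.
\]
Because $\theta_j\in[0,\pi/2)$ and $2-2\cos\theta=\theta^2(1+O(\theta^2))$, we get $\theta_j=\frac{H_j}{\sqrt V}(1+O(\theta_j^2))$. The first step is to check that $\theta_j=O(\delta)$ on $\mathcal U_\delta$. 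This holds because $F_j(\varphi_i)=1+O(\delta)$ pointwise, so $\cos\theta_j\ge 1-C\delta^2$. Hence it suffices to show
\[
 H_j=\frac{j}{2n}\,\|\Delta_\omega u\|_{L^2(\omega^n)}\bigl(1+O(\delta)\bigr),
 \qquad u:=\varphi_1-\varphi_0 ,
\]
with $\Delta_\omega u:=n\,\ddc u\wedge\omega^{n-1}/\omega^n=\tr_\omega\ddc u$. Then $\theta_{k+1}/\theta_k=\frac{k+1}{k}(1+O(\delta))$, which gives \eqref{eq:comparison-main-ratio}.

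The ratio is well defined because $[\varphi_0]\neq[\varphi_1]$ forces $\Delta_\omega u\not\equiv0$ by the maximum principle. Then \eqref{eq:comparison-strict} follows once $C\delta_0<1/k$.

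For the expansion, write pointwise $A_i:=\omega^{-1}\omega_{\varphi_i}-I$, a Hermitian endomorphism with $|A_i|\le C\delta$. We have $F_j(\varphi_i)=f_j(A_i)$, where $f_j(A)=\binom nj^{-1}\sigma_j(I+A)$ is a polynomial with $f_j(0)=1$ and $df_j(0)=\frac jn\tr$. So $g_j:=\sqrt{f_j}$ is smooth near $0$ with $dg_j(0)=\frac j{2n}\tr$ and $|d^2g_j|\le C(n,j)$ on a fixed ball. Taylor's formula along the segment from $A_0$ to $A_1$ then gives, pointwise,
\[
 \Bigl|g_j(A_1)-g_j(A_0)-\tfrac j{2n}\tr(A_1-A_0)\Bigr|\le C\,\delta\,|A_1-A_0| .
\]
Here $\tr(A_1-A_0)=\Delta_\omega u$ and $|A_1-A_0|=|\ddc u|_\omega$. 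Taking $L^2$ norms, we get $\bigl|H_j-\frac j{2n}\|\Delta_\omega u\|_2\bigr|\le C\delta\,\|\ddc u\|_2$.

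The main obstacle is to control the full complex Hessian by the Laplacian, since this fails pointwise. In $L^2$ it holds with equality on a compact Kähler manifold. The form $\ddc u\wedge\ddc u\wedge\omega^{n-2}$ is exact, so its integral vanishes. Pointwise, $n(n-1)\,\ddc u\wedge\ddc u\wedge\omega^{n-2}=\bigl((\Delta_\omega u)^2-|\ddc u|^2_\omega\bigr)\omega^n$. Hence $\|\ddc u\|_2=\|\Delta_\omega u\|_2$. This closes the estimate with constants depending only on $(n,k)$, using $j\in\{k,k+1\}$.
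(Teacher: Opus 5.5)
Your proof is correct and follows essentially the same route as the paper. Your Hellinger distance $H_j$ is half the paper's chord length $\ell_j$. Your Taylor expansion of $\sqrt{f_j}$ along the segment from $A_0$ to $A_1$ is the paper's integration of $DQ_j$ along the affine path $\varphi_t$, done in matrix entries rather than after pointwise diagonalization. You then close the error with the same K\"ahler identity $\|\ddc u\|_2=\|\Delta_\omega u\|_2$ and the same chord-to-arc conversion via Corollary~\ref{cor:distance-diameter}, with $\theta_j=O(\delta)$ coming from $1-\cos\theta_j=H_j^2/(2V)\le C\delta^2$, which is the paper's bound $\ell_j/R\le C\delta$.
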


\begin{proof}
Write $\|\cdot\|_2$ for the $L^2(X,\omega^n)$ norm, and use the
background K\"ahler metric $\omega$ to measure tensors. The constants in
$O(\delta)$ below are uniform over the potentials, tangent vectors and
endpoints under consideration, and depend only on $n$ and the indicated
Hessian order.

\medskip\noindent\textbf{Step 1. A uniform estimate for the differentials.}
We first show that, when $\omega_\varphi$ is sufficiently close to $\omega$,
the differential of the square-root Hessian density map $Q_j$ is a
uniformly small perturbation of $(j/n)\Delta_\omega$.
Fix $x\in X$ and choose $\omega$-unitary complex coordinates at $x$
such that the Hermitian endomorphism
$A:=\omega^{-1}\omega_\varphi$ is diagonal:
\[
 A=\operatorname{diag}(\lambda_1,\ldots,\lambda_n).
\]
For a smooth real tangent vector $u$, write
$B:=\omega^{-1}\ddc u=(B_{i\bar l})$. Thus
$\Delta_\omega u:=\tr_\omega(\ddc u)=\sum_i B_{i\bar i}$.
In these coordinates $\omega_{\varphi+tu}$ corresponds to $A+tB$.

For $j\in\{k,k+1\}$, the normalized $j$-Hessian density is
\[
 F_j(\varphi)=\frac{\sigma_j(\lambda)}{\binom nj},
\qquad
 \sigma_j(\lambda)
 =\sum_{1\le i_1<\cdots<i_j\le n}
   \lambda_{i_1}\cdots\lambda_{i_j}.
\]
To differentiate $F_j$ in the direction $u$, recall that
$\sigma_j(A)$ is the sum of all principal $j\times j$ minors of $A$.
At a diagonal matrix $A$, the first variation of such a minor in the
direction $B$ consists only of its diagonal variations: changing an
off-diagonal entry contributes no linear term. More explicitly,
\[
 \left.\frac{d}{dt}\right|_{t=0}\sigma_j(A+tB)
 =\sum_{|I|=j}\sum_{i\in I}
   \left(\prod_{l\in I\setminus\{i\}}\lambda_l\right)
   B_{i\bar i}
 =\sum_{i=1}^n\sigma_{j-1}(\lambda|i)B_{i\bar i},
\]
where $\lambda|i$ denotes the list of eigenvalues with $\lambda_i$
omitted. Equivalently,
$\partial\sigma_j/\partial\lambda_i=\sigma_{j-1}(\lambda|i)$.
Consequently,
\begin{equation}\label{eq:comparison-dFj}
 F_j(\varphi)=\frac{\sigma_j(\lambda)}{\binom nj},
 \qquad
 DF_j\big|_\varphi(u)
 =\frac{1}{\binom nj}
  \sum_{i=1}^n\sigma_{j-1}(\lambda|i)B_{i\bar i}.
\end{equation}
Since $Q_j=2\sqrt{F_j}$, the chain rule gives
$DQ_j|_\varphi(u)=DF_j|_\varphi(u)/\sqrt{F_j(\varphi)}$; hence
\begin{equation}\label{eq:comparison-dQj}
 DQ_j\big|_\varphi(u)
 =\sum_{i=1}^n
 \frac{\sigma_{j-1}(\lambda|i)}
 {\binom nj\sqrt{F_j(\varphi)}}\,B_{i\bar i}.
\end{equation}

Let us denote the coefficient of $B_{i\bar i}$ by
\[
 a_i^{(j)}(\lambda)
 :=\frac{\sigma_{j-1}(\lambda|i)}
 {\binom nj\sqrt{\sigma_j(\lambda)/\binom nj}}.
\]
At the reference vector ${\bf 1}=(1,\ldots,1)$ one has
$\sigma_j({\bf 1})=\binom nj$ and
$\sigma_{j-1}({\bf 1}|i)=\binom{n-1}{j-1}$. Therefore
\[
 a_i^{(j)}({\bf 1})
 =\frac{\binom{n-1}{j-1}}{\binom nj}
 =\frac jn,
 \qquad
 DQ_j\big|_0(u)=\frac jn\Delta_\omega u.
\]

For $[\varphi]\in\mathcal U_\delta$,
\[
 (1+\delta)^{-1}<\lambda_i<1+\delta,
 \qquad |\lambda_i-1|\le\delta.
\]
Take, for example, $0<\delta<1/2$. The eigenvalue vector then lies
in the fixed compact cube $[1/2,2]^n$, where each
$a_i^{(j)}$ is smooth and all its first partial derivatives are bounded
by a constant depending only on $n,j$. Applying the multivariable mean
value theorem along the segment from ${\bf 1}$ to $\lambda$ yields
\[
 \left|a_i^{(j)}(\lambda)-\frac jn\right|
 \le C_{n,j}\max_l|\lambda_l-1|
 \le C_{n,j}\delta.
\]
Subtracting $(j/n)\Delta_\omega u$ from
\eqref{eq:comparison-dQj} and applying Cauchy--Schwarz,
\begin{align*}
 \left|DQ_j\big|_\varphi(u)-\frac jn\Delta_\omega u\right|
 &\le\sum_{i=1}^n
  \left|a_i^{(j)}(\lambda)-\frac jn\right|\,|B_{i\bar i}|\\
 &\le C_{n,j}\delta\sum_{i=1}^n|B_{i\bar i}|\\
 &\le C_{n,j}\delta\,|B|_\omega.
\end{align*}
Here the harmless factor $\sqrt n$ has been absorbed into $C_{n,j}$,
and $|B|_\omega=|\ddc u|_\omega$. We conclude that
\begin{equation}\label{eq:comparison-pointwise-error}
 \left|DQ_j\big|_\varphi(u)-\frac jn\Delta_\omega u\right|
 \leq C_{n,j}\delta\,|\ddc u|_\omega.
\end{equation}
This is a pointwise statement. We do not claim that
$|\ddc u|_\omega$ is pointwise controlled by $|\Delta_\omega u|$;
that additional control holds after integration and is the purpose of
Step~2.

\medskip\noindent\textbf{Step 2. The K\"ahler $L^2$ identity.}
For a real smooth function $u$, the pointwise algebraic identity for real
$(1,1)$-forms reads
\[
 \bigl((\Delta_\omega u)^2-|\ddc u|_\omega^2\bigr)\omega^n
 =n(n-1)(\ddc u)^2\wedge\omega^{n-2}.
\]
The integral on the right vanishes by Stokes' theorem, because
$\omega$ is closed and $\ddc u$ is exact. Thus
\begin{equation}\label{eq:comparison-kahler-L2}
 \|\ddc u\|_2=\|\Delta_\omega u\|_2.
\end{equation}
Combining \eqref{eq:comparison-pointwise-error} with
\eqref{eq:comparison-kahler-L2}, we obtain
\begin{equation}\label{eq:comparison-differential-L2}
 \left\|DQ_j\big|_\varphi(u)-\frac jn\Delta_\omega u\right\|_2
 \leq C_{n,j}\delta\,\|\Delta_\omega u\|_2,
 \qquad [\varphi]\in\mathcal U_\delta.
\end{equation}
In particular, the same estimate shows at the tensor level that
\begin{equation}\label{eq:comparison-tangent-estimate}
 \left(\frac jn-C_{n,j}\delta\right)\|\Delta_\omega u\|_2
 \leq\sqrt{G^{\HC,j}_\varphi(u,u)}
 \leq
 \left(\frac jn+C_{n,j}\delta\right)\|\Delta_\omega u\|_2.
\end{equation}

\medskip\noindent\textbf{Step 3. Comparison of the endpoint chord lengths.}
Choose representatives and set $u=\varphi_1-\varphi_0$.
The affine segment
$\varphi_t=(1-t)\varphi_0+t\varphi_1$ belongs to
$\mathcal U_\delta$, since
$\omega_{\varphi_t}=(1-t)\omega_{\varphi_0}+t\omega_{\varphi_1}$.
By the fundamental theorem of calculus and
\eqref{eq:comparison-differential-L2},
\begin{align}
 Q_j([\varphi_1])-Q_j([\varphi_0])
 &=\int_0^1 DQ_j\big|_{\varphi_t}(u)\,dt,\notag\\
 \left\|Q_j([\varphi_1])-Q_j([\varphi_0])
 -\frac jn\Delta_\omega u\right\|_2
 &\leq C_{n,j}\delta\,\|\Delta_\omega u\|_2.
 \label{eq:comparison-chord-error}
\end{align}
Let
\[
 L:=\|\Delta_\omega u\|_2,
 \qquad
 \ell_j:=\|Q_j([\varphi_1])-Q_j([\varphi_0])\|_2.
\]
Since the endpoints are distinct modulo constants, $L>0$. The triangle
inequality in \eqref{eq:comparison-chord-error} gives
\begin{equation}\label{eq:comparison-chord-bounds}
 \left(\frac jn-C_{n,j}\delta\right)L
 \leq \ell_j\leq
 \left(\frac jn+C_{n,j}\delta\right)L.
\end{equation}

\medskip\noindent\textbf{Step 4. From chords to geodesic distances.}
The preceding step controls the $L^2$ chord length $\ell_j$ between the
images of the endpoints. We now use the global spherical description to
convert this estimate into one for the original geodesic distance $d_j$.

First, the condition $[\varphi]\in\mathcal U_\delta$ implies
$(1+\delta)^{-1}<\lambda_i<1+\delta$. Since $F_j$ is the average of
all products of $j$ distinct eigenvalues, it follows that
\[
 (1+\delta)^{-j}<F_j(\varphi)<(1+\delta)^j.
\]
For $\delta$ sufficiently small these bounds give the uniform
\emph{pointwise} estimate
\[
 |F_j(\varphi)-1|\le C_{n,j}\delta.
\]
Using $Q_j=2\sqrt{F_j}$ and the identity
$|\sqrt a-1|=|a-1|/(\sqrt a+1)$ for $a>0$, we obtain
\[
 |Q_j([\varphi])(x)-2|
 =\frac{2|F_j(\varphi)(x)-1|}{\sqrt{F_j(\varphi)(x)}+1}
 \le C_{n,j}\delta,
 \qquad x\in X.
\]
The constant function $2$ lies on the radius-$R$ sphere, since
$\|2\|_2^2=4V=R^2$. Put
$q_i=Q_j([\varphi_i])$ for $i=0,1$. Then
\[
 \|q_i-2\|_2\le C_{n,j}\delta\sqrt V,
\]
and hence the triangle inequality gives
\[
 \ell_j=\|q_1-q_0\|_2
 \le\|q_1-2\|_2+\|q_0-2\|_2
 \le 2C_{n,j}\delta\sqrt V.
\]
As $R=2\sqrt V$, this proves
\begin{equation}\label{eq:comparison-short-chords}
 \frac{\ell_j}{R}\leq C_{n,j}\delta.
\end{equation}

Next, recall from Theorem~\ref{thm:global-isometry} and
Corollary~\ref{cor:distance-diameter} that the ambient distance $d_j$
on the full space $\widetilde{\mathcal H}_j$ is the intrinsic distance
between $q_0,q_1$ in the positive part of the radius-$R$ sphere.
In particular, the shorter great-circle arc is entirely composed of
positive functions and is minimizing; hence if
\[
 \theta_j
 :=\arccos\!\left(\frac{\langle q_0,q_1\rangle_{L^2}}{R^2}\right)
 \in[0,\pi/2),
\]
then
\[
 d_j([\varphi_0],[\varphi_1])=R\theta_j.
\]
Although the ambient Hilbert space is infinite-dimensional, the two
points and the origin span a subspace of dimension at most two, so the
usual elementary circle geometry applies. Indeed,
\begin{align*}
 \ell_j^2
 &=\|q_1-q_0\|_2^2
  =\|q_1\|_2^2+\|q_0\|_2^2-2\langle q_0,q_1\rangle_{L^2}\\
 &=2R^2(1-\cos\theta_j)
  =4R^2\sin^2(\theta_j/2).
\end{align*}
Since $0\le\theta_j<\pi/2$, we have
$\ell_j=2R\sin(\theta_j/2)$, which is equivalent to the exact
chord--arc formula
\begin{equation}\label{eq:comparison-chord-arc}
 d_j([\varphi_0],[\varphi_1])
 =2R\arcsin\!\left(\frac{\ell_j}{2R}\right).
\end{equation}

By \eqref{eq:comparison-short-chords}, the argument
$z_j:=\ell_j/(2R)$ satisfies $0\le z_j\le C_{n,j}\delta$.
For $\delta$ small enough that $z_j$ stays in a fixed neighborhood of
zero, Taylor's formula yields uniformly
\[
 \arcsin z=z+\frac{z^3}{6}+O(z^5).
\]
Substituting $z_j$ in \eqref{eq:comparison-chord-arc} gives
\[
 d_j
 =\ell_j+\frac{\ell_j^3}{24R^2}
   +O\!\left(\frac{\ell_j^5}{R^4}\right)
 =\ell_j\left(1+O_{n,j}\!\left(\frac{\ell_j^2}{R^2}\right)\right).
\]
In view of \eqref{eq:comparison-short-chords}, the relative error is
$O_{n,j}(\delta^2)$. Consequently,
\begin{equation}\label{eq:comparison-distance-chord}
 d_j([\varphi_0],[\varphi_1])
 =\ell_j\bigl(1+O_{n,j}(\delta^2)\bigr).
\end{equation}
Notice that the conversion from chord length to spherical arc length
introduces an error of order $\delta^2$, whereas the earlier estimate
\eqref{eq:comparison-chord-bounds} has an error of order $\delta$.
Combining these two estimates, with
$L=\|\Delta_\omega(\varphi_1-\varphi_0)\|_2$, yields
\begin{equation}\label{eq:comparison-distance-Laplacian}
 d_j([\varphi_0],[\varphi_1])
 =\left(\frac jn+O_{n,j}(\delta)\right)
 \|\Delta_\omega(\varphi_1-\varphi_0)\|_2,
 \qquad j=k,k+1.
\end{equation}
Here the notation means two-sided bounds with an error of absolute value
at most $C_{n,j}\delta$ times the indicated positive norm; it does not
assume any differentiability of distance along the affine segment.

\medskip\noindent\textbf{Step 5. The ratio and strict monotonicity.}
Dividing the two estimates in
\eqref{eq:comparison-distance-Laplacian}, with $\delta$ small enough so
that the denominator is bounded away from zero, gives
\[
 \frac{d_{k+1}([\varphi_0],[\varphi_1])}
      {d_k([\varphi_0],[\varphi_1])}
 =\frac{(k+1)/n+O_{n,k}(\delta)}
       {k/n+O_{n,k}(\delta)}
 =\frac{k+1}{k}+O_{n,k}(\delta),
\]
which proves \eqref{eq:comparison-main-ratio}. Finally,
$(k+1)/k>1$, so we can choose $\delta_0(n,k)>0$ small enough that the
right-hand side of \eqref{eq:comparison-main-ratio} is strictly greater
than $1$. This proves \eqref{eq:comparison-strict}.
\end{proof}

\begin{remark}\label{rem:comparison-intrinsic}
One can also consider the intrinsic length distances obtained by
restricting the paths in the definition of $d_j$ to $\mathcal U_\delta$.
For these \emph{restricted} distances, the uniform tangent-space bounds
\eqref{eq:comparison-tangent-estimate} directly compare the lengths of
every admissible path, and hence give the same ratio
$(k+1)/k+O_{n,k}(\delta)$ after taking infima. Theorem~\ref{thm:adjacent-distance-comparison}
is stronger in a different sense: it compares the original ambient
distances, for which a minimizing geodesic need not remain inside
$\mathcal U_\delta$.
\end{remark}


\vskip10pt
Linwei Zhang, School of Mathematical Sciences,
University of Science and Technology of China, No. 96 Jinzhai Road, Hefei,
Anhui Province, 230026, China;  zhanglinwei@mail.ustc.edu.cn.

\end{document}